\newtheorem{theorem}{Theorem}[section]
\newtheorem{lemma}[theorem]{Lemma}
\newtheorem{proposition}[theorem]{Proposition}
\newtheorem{corollary}[theorem]{Corollary}
\theoremstyle{definition}
\theoremstyle{remark}
\newtheorem{remark}[theorem]{Remark}
\newcommand{\Mod}{\text{\rm mod-}}
\newcommand{\Hom}{\operatorname{Hom}}
\newcommand{\Ext}{\operatorname{Ext}}
\newcommand{\End}{\operatorname{End}}
\title{On the cardinalities of Kronecker quiver Grassmannians}
\author{Csaba Sz\'ant\'o}
\begin{document}
\maketitle \pagestyle{myheadings} \markboth{\sc Csaba
Sz\'ant\'o}{\sc On the cardinalities of some Kronecker quiver
Grassmannians}

{\bf Abstract.} We deduce using the Ringel-Hall algebra approach
explicit formulas for the cardinalities of some Grassmannians over a
finite field associated to the Kronecker quiver. We realize in this
way a quantification of the formulas obtained by Caldero and
Zelevinsky for the Euler characteristics of these Grassmannians. We
also present a recursive algorithm for computing the cardinality of
every Kronecker quiver Grassmannian over a finite field.

\medskip

{\bf Key words.} Kronecker algebra, Hall algebra, Grassmannian

\medskip

{\bf 2000 Mathematics Subject Classification.} 16G20.

\bigskip

\begin{center}{\bf Introduction}\end{center}
\medskip

Let $K$ be the Kronecker quiver $K: \xymatrix{1 & 2 \ar@<1ex>
[l]^{\beta} \ar@<-1ex>[l]_{\alpha}}$, $kK$ be the Kronecker algebra
over the finite field $k=\Bbb F_q$ with $q$ elements and $\Mod kK$
the category of its finite dimensional right modules (called
Kronecker modules). We consider the rational Ringel-Hall algebra
$\mathcal{H}(kK)$ of the Kronecker algebra, with a $\Bbb Q$-basis
formed by the isomorphism classes $[M]$ from $\Mod kK$ and
multiplication
$$[N_1][N_2]=\sum_{[M]}F^M_{N_1N_2}[M].$$ The structure constants
$F^M_{N_1N_2}=|\{M\supseteq U|\ U\cong N_2,\ M/U\cong N_1\}|$ are
called Ringel-Hall numbers.

For any module $M\in\Mod kK$, and any $\underline{e}=(a,b)$ in $\Bbb
N^2$, we denote by $Gr_{\underline{e}}(M)_{\Bbb F_q}$ the
Grassmannian of submodules of $M$ with dimension vector
$\underline{e}$:
$$Gr_{\underline{e}}(M)_{\Bbb F_q} = \{N\in\Mod kK|N\leq M, \underline{dim}(N)
=\underline{e}\}.$$ Then we have that
$$|Gr_{\underline{e}}(M)_{\Bbb F_q}|=\sum_{\begin{tiny}\begin{array}{c}[X],[Y]\\\underline{dim}Y=\underline{e}\end{array}\end{tiny}}F^M_{XY}$$

The Grassmannian cardinalities above play an important role in the
theory of cluster algebras. In \cite{CalRei} Caldero and Reineke
show for affine quivers that these cardinal numbers are given by
integral polynomials in $q$ with positive coefficients. So in our
case there is an integral polynomial $p_{\underline{e},M}$ such that
$|Gr_{\underline{e}}(M)_{\Bbb F_q}|=p_{\underline{e},M}(q)$.
Moreover the Euler characteristics $\chi(Gr_{\underline{e}}(M)_{\Bbb
C})=p_{\underline{e},M}(1)$.

In \cite{CalZel} Caldero and Zelevinsky describe explicit
combinatorial formulas for the Euler characteristics
$\chi(Gr_{\underline{e}}(M)_{\Bbb C})=p_{\underline{e},M}(1)$
whenever $M$ is indecomposable.

Using specific recursions obtained by the Ringel-Hall algebra
approach and the use of reflection functors we deduce in this paper
explicit combinatorial formulas for the cardinalities (polynomials)
$|Gr_{\underline{e}}(M)_{\Bbb F_q}|=p_{\underline{e},M}(q)$ whenever
$M$ is indecomposable. We obtain in this way a quantification of the
formulas by Caldero and Zelevinsky. Moreover our recursions provide
a recursive algorithm for computing the cardinality of every
Kronecker quiver Grassmannian over a finite field.
\section{\bf Facts on Kronecker modules and Ringel-Hall algebras}

The indecomposables in $\Mod kK$ are divided into three families:
the preprojectives, the regulars and the preinjectives (see
\cite{assem},\cite{aus},\cite{rin1}).

The preprojective (respectively preinjective) indecomposable modules
are up to isomorphism uniquely determined by their dimension
vectors. For $n\in\Bbb N$ we will denote by $P_n$ (respectively with
$I_n$) the indecomposable preprojective module of dimension
$(n+1,n)$ (respectively the indecomposable preinjective module of
dimension $(n,n+1)$). So $P_0$, $P_1$ are the projective
indecomposable modules ($P_0=S_1$ being simple) and $I_0$, $I_1$ the
injective indecomposable modules ($I_0=S_2$ being simple).

The regular indecomposables (up to isomorphism) are $R_p(t)$ for
$t\geq 1$ and $p\in\Bbb P^1_k$ of dimension vector $(td_p,td_p)$
($d_p$ standing for the degree of the point $p$). The module
$R_p(t)$ has regular length $t$ and regular socle the regular simple
$R_p(1)$. Suppose that $R_p(0)=0$. Note that $R_p(t)$ is regular
uniserial meaning that the only regular submodule series of $R_p(t)$
is $0\subset R_p(1)\subset...\subset R_p(t)$.

We will denote by $R_p(\lambda)$ (where $\lambda$ is a partition)
the module $\oplus_iR_p(\lambda_i)$ and by $P$ (respectively
$I$,$R$) a module with all its indecomposable components
preprojective (respectively preinjective, regular).

Denote by $cM=M\oplus...\oplus M$ $c$-times.

The following lemma is well known.
\begin{lemma}

{\rm a)} $\Hom ({R},{P})=\Hom ({I},{P})=\Hom ({I},{R})=
\Ext^1({P},{R})=\Ext^1({P},{I})= \Ext^1({R},{I})=0.$

{\rm b)} There are no nontrivial morphisms and extensions between
regular modules from different tubes, i.e. if $p\neq p'$, then
$\Hom(R_p(t),R_{p'}(t'))=\Ext^1(R_p(t),R_{p'}(t'))=0.$

{\rm c)} For $n\leq m$, we have $\dim _k\Hom (P_n,P_m)=m-n+1$ and
$\Ext^1(P_n,P_m)=0$; otherwise $\Hom (P_n,P_m)=0$ and $\dim
_k\Ext^1(P_n,P_m)=n-m-1$. In particular $\End(P_n)\cong k$ and
$\Ext^1(P_n,P_n)=0$.

{\rm d)} For $n\geq m$, we have $\dim _k\Hom (I_n,I_m)=n-m+1$ and
$\Ext^1(I_n,I_m)=0$; otherwise $\Hom (I_n,I_m)=0$ and $\dim
_k\Ext^1(I_n,I_m)=m-n-1$. In particular $\End(I_n)\cong k$ and
$\Ext^1(I_n,I_n)=0$.

{\rm e)} $\dim _k\Hom (P_n,I_m)=n+m$ and $\dim
_k\Ext^1(I_m,P_n)=m+n+2$.

{\rm f)} $\dim _k\Hom (P_n,R_p(t))=\dim _k\Hom (R_p(t),I_n)=d_pt$
and $\dim _k\Ext^1(R_p(t),P_n)=\dim _k\Ext^1(I_n,R_p(t))=d_pt$.

{\rm g)} $\dim _k\Hom (R_p(t_1),R_p(t_2))=\dim
_k\Ext^1(R_p(t_1),R_p(t_2))=d_p\min{(t_1,t_2)}$.

\end{lemma}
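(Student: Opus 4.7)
The lemma collects standard facts about the module category of the Kronecker algebra; all of them can be derived in a uniform way from three tools: the Euler (Ringel) bilinear form on $K_0(kK)$, the Auslander--Reiten formula, and the combinatorics of the Auslander--Reiten translate $\tau$ on the Kronecker quiver.

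First I would record the Euler form: since $kK$ is hereditary, for any $X,Y\in\Mod kK$
$$\dim_k\Hom(X,Y)-\dim_k\Ext^1(X,Y)=\langle\underline{\dim}X,\underline{\dim}Y\rangle,$$
and on $K$ (with both arrows going $2\to 1$) this form evaluates to $\langle(a,b),(c,d)\rangle=ac+bd-2bc$. I would also recall the Auslander--Reiten formula $\Ext^1(X,Y)\cong D\Hom(Y,\tau X)$ (valid since $kK$ is hereditary, once $X$ has no projective summand; the projective summands contribute trivially to $\Ext^1$) together with the known action of $\tau$: $\tau P_n=P_{n-2}$ with $\tau P_0=\tau P_1=0$, $\tau^{-1}I_n=I_{n-2}$ with $\tau^{-1}I_0=\tau^{-1}I_1=0$, and $\tau R_p(t)=R_p(t)$.

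Parts (a) and (b) then follow immediately. No nonzero morphism goes ``backwards'' along the natural order $P\to R\to I$ on the AR quiver of $kK$, which gives every Hom vanishing in (a) and the Hom half of (b). Each Ext vanishing in (a) is converted via the AR formula into a Hom vanishing already covered (e.g.\ $\Ext^1(R,I)\cong D\Hom(I,\tau R)$ with $\tau R$ still regular, and $\Ext^1(P,I)\cong D\Hom(I,\tau P)$ with $\tau P$ preprojective or zero). The remaining vanishings in (b) come from the standard fact that distinct tubes of a tame hereditary algebra are pairwise orthogonal.

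For (c)--(g) the recipe is the same in every case: first identify which of $\Hom$ or $\Ext^1$ is zero (using (a), (b), and the preprojective/preinjective orderings induced by $\tau$), then read off the dimension of the surviving space from the Euler form. For example, if $n\leq m$ then $\Ext^1(P_n,P_m)\cong D\Hom(P_m,P_{n-2})=0$ by the preprojective order, so $\dim_k\Hom(P_n,P_m)=\langle(n+1,n),(m+1,m)\rangle=m-n+1$; and $\dim_k\Hom(P_n,I_m)=\langle(n+1,n),(m,m+1)\rangle=n+m$ since $\Ext^1(P_n,I_m)=0$ by (a). Parts (f) and (g) additionally use $d_p=\dim_k\End(R_p(1))$ and $\underline{\dim}R_p(t)=(td_p,td_p)$. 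The only step that is not a one-line Euler-form plug-in is the internal regular case in (g), where both $\Hom$ and $\Ext^1$ are nonzero so the Euler form alone cannot separate them. Here I would induct on $\min(t_1,t_2)$ using the regular uniserial structure (the short exact sequence $0\to R_p(1)\to R_p(t)\to R_p(t-1)\to 0$) together with the fact that $R_p(1)$ is a brick with endomorphism algebra the residue field $k(p)$ of degree $d_p$; this is the only nontrivial obstacle in the proof.
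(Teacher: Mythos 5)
The paper does not actually prove this lemma: it is stated as ``well known'' and implicitly delegated to the standard references (Assem--Simson--Skowro\'nski, Auslander--Reiten--Smal\o, Ringel), so there is no in-paper argument to compare against. Your plan is a correct and essentially standard derivation of exactly the kind those references contain. The three tools are the right ones, and the computations check out: with the orientation of the paper (both arrows $2\to 1$, dimension vectors $(\dim_1,\dim_2)$) the Euler form is indeed $\langle(a,b),(c,d)\rangle=ac+bd-2bc$, which gives $m-n+1$ for (c), $n+m$ and $m+n+2$ for (e), $\pm d_pt$ for (f), and $0$ on a tube, confirming that (g) is the one case the form cannot resolve; the $\tau$-combinatorics $\tau P_n=P_{n-2}$, $\tau I_n=I_{n+2}$, $\tau R_p(t)=R_p(t)$ is correct (all Kronecker tubes are homogeneous), and your caveat about projective summands in the AR formula is handled properly since $kK$ is hereditary. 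Two small remarks. First, be aware that the Hom-vanishings in (a) and the directedness of the preprojective/preinjective components are not derived but invoked; that is fine for a ``well known'' lemma, but it means your proof is really a reduction of (c)--(g) to those structural facts plus the Euler form, and a fully self-contained treatment would prove the orderings, e.g.\ via the defect $\partial(a,b)=b-a$ or directedness of the preprojective component. Second, in (g) you can shorten the work: since $\tau R_p(t)=R_p(t)$, the AR formula gives $\Ext^1(R_p(t_1),R_p(t_2))\cong D\Hom(R_p(t_2),R_p(t_1))$, so only the Hom dimension needs your uniserial induction (equivalently, identify the tube with finite-length $k(p)[[x]]$-modules, where $\Hom(k(p)[x]/(x^{t_1}),k(p)[x]/(x^{t_2}))$ has $k(p)$-dimension $\min(t_1,t_2)$, hence $k$-dimension $d_p\min(t_1,t_2)$); the Ext half then comes for free rather than requiring a separate inductive step.
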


Let now $\tilde K$ be the quiver obtained by reversing the arrows in
$K$. The category $\Mod k\tilde K$ can be identified with the
category $\Mod kK$ after a formal relabeling of the vertices. In
general we will denote by $\neg M\in\Mod k\tilde K$ (respectively by
$\neg M\in\Mod kK$) the relabeled version of $M\in\Mod kK$
(respectively of $M\in\Mod k\tilde K$). So we have $\neg{\neg M}=M$.

For $i=1,2$ denote by $\Mod kK\langle i\rangle$ (respectively by
$\Mod k\tilde K\langle i\rangle$) the full subcategory of modules
not containing the simple $S_i$ (respectively the simple $\tilde
S_i$) as a direct summand. Notice that using the formal relabeling
mentioned above $\Mod k\tilde K\langle 1\rangle$ can be identified
with $\Mod kK\langle 2\rangle$ and $\Mod k\tilde K\langle 2\rangle$
with $\Mod kK\langle 1\rangle$. Since the vertex 1 is a sink in the
quiver $K$ and a source in $\tilde K$ the restriction of the
corresponding reflection functors to the above mentioned full
subcategories will give us the following inverse pair of
equivalences
$$\neg R^+_1:\Mod kK\langle 1\rangle\to\Mod kK\langle 2\rangle
\text{ and }R^-_1\neg:\Mod kK\langle 2\rangle\to\Mod kK\langle
1\rangle.$$

We refer to \cite{dlabrin} for all notions and properties related to
the reflection functors. Notice that we have $$\neg R^+_1(P_n)=
P_{n-1}\text{ for } n\in\Bbb N^*, \neg R^+_1(I_n)=I_{n+1}\text{ for
} n\in\Bbb N.$$ Moreover for each $d$ there is a permutation
$\sigma_d$ of the set $\{p\in\Bbb P^1_k|d_p=d\}$ such that for each
$p$ with $d_p=d$ we have
$$\neg R^+_1(R_p(t))= R_{\sigma_d(p)}(t)\text{ for all } t\in\Bbb
N^*.$$
\medskip
\begin{remark} Notice that if $M\in\Mod kK\langle 1\rangle$ with
$\underline{dim}M=(m,n)$, then $m-n$ is at most $n$ and
$\underline{dim}(\neg R^+_1(M))=(m-(m-n),n-(m-n))=(n,2n-m)$. Also if
$M\in\Mod kK\langle 2\rangle$ with $\underline{dim}M=(m,n)$, then
$n-m$ is at most $m$ and
$\underline{dim}(R^-_1\neg(M))=(m+(m-n),n+(m-n))=(2m-n,m)$.
\end{remark}
Related with the Ringel-Hall algebra we will need the following
properties (see \cite{Szanto}):

\begin{lemma} (Associativity of the Ringel-Hall multiplication)
$\sum_{[X]}F^X_{AB}F^M_{XC}=\sum_{[X]}F^M_{AX}F^X_{BC}$.
\end{lemma}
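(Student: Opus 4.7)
The plan is to interpret both sides of the identity as two ways of counting the same set of three–step flags in $M$. Concretely, set
$$\Phi^M_{A,B,C}=\{(U,W)\mid U\subseteq W\subseteq M,\ U\cong C,\ W/U\cong B,\ M/W\cong A\},$$
and show that both sides compute $|\Phi^M_{A,B,C}|$. The equality of the two sums then follows immediately, which is why associativity of the Hall multiplication is sometimes called the \emph{flag counting} identity.

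For the left-hand side, I would fix a term $[X]$ and unpack the factors. The factor $F^M_{XC}$ counts submodules $U\subseteq M$ with $U\cong C$ and $M/U\cong X$; for each such $U$, the factor $F^X_{AB}$ counts submodules $\bar V\subseteq M/U$ with $\bar V\cong B$ and $(M/U)/\bar V\cong A$. By the Correspondence Theorem, submodules $\bar V\subseteq M/U$ are in bijection with submodules $W$ of $M$ containing $U$, with $\bar V=W/U$ and $(M/U)/\bar V\cong M/W$. Summing over $[X]$ simply removes the constraint that $M/U$ have a fixed isomorphism type, so the left-hand side equals $|\Phi^M_{A,B,C}|$. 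For the right-hand side, $F^M_{AX}$ counts submodules $W\subseteq M$ with $W\cong X$ and $M/W\cong A$, and $F^X_{BC}$ then counts submodules $U\subseteq W$ with $U\cong C$ and $W/U\cong B$; summing over $[X]$ drops the middle isomorphism constraint and again yields $|\Phi^M_{A,B,C}|$.

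There is no real obstacle here: once the correct flag set is identified, both expressions are literal counts of it. The only mild point requiring care is the bijection between submodules of $M/U$ and intermediate submodules $U\subseteq W\subseteq M$, which is the standard lattice correspondence valid in any abelian category. Finiteness of every sum is automatic because the $\mathbb{F}_q$-module $M$ has only finitely many submodules, and the sums over $[X]$ have only finitely many nonzero terms (those $X$ occurring as a quotient, equivalently as a submodule, in the relevant flag).
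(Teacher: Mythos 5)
Your proof is correct: both sides indeed count the flag set $\Phi^M_{A,B,C}$ (note that with the paper's convention $F^M_{N_1N_2}=|\{U\subseteq M\mid U\cong N_2,\ M/U\cong N_1\}|$, your identification of the two counts is exactly right, including the implicit use of $W\cong X$ to transfer the count $F^X_{BC}$ to submodules of $W$). The paper itself gives no proof, citing \cite{Szanto} instead, and your flag-counting argument is precisely the standard proof of this associativity identity, so there is nothing to add.
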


\begin{lemma} For $N_1,N_2\in \Mod kK$ with
$\Ext^1(N_1,N_2)=0$ and $\Hom (N_2,N_1)=0$ we have
$[N_1][N_2]=[N_1\oplus N_2]$.
\end{lemma}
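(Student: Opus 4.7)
My plan is to unpack the definition of the Ringel-Hall product
$$[N_1][N_2]=\sum_{[M]}F^M_{N_1N_2}[M]$$
and argue that under the stated hypotheses the sum reduces to a single term, namely $[N_1\oplus N_2]$ with coefficient $1$.

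First I would use the vanishing $\Ext^1(N_1,N_2)=0$ to cut down the support of the sum. Indeed, whenever $F^M_{N_1N_2}\neq 0$ there exists a submodule $U\subseteq M$ with $U\cong N_2$ and $M/U\cong N_1$, giving a short exact sequence $0\to N_2\to M\to N_1\to 0$. Since $\Ext^1(N_1,N_2)=0$, this sequence splits, forcing $M\cong N_1\oplus N_2$. Therefore $F^M_{N_1N_2}=0$ for every $M\not\cong N_1\oplus N_2$, and it remains only to show $F^{N_1\oplus N_2}_{N_1N_2}=1$.

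Next I would explicitly enumerate the submodules contributing to $F^{N_1\oplus N_2}_{N_1N_2}$. Any such submodule $U$ is the image of some monomorphism $\iota\colon N_2\hookrightarrow N_1\oplus N_2$, which I would decompose componentwise as $\iota=(\iota_1,\iota_2)$ with $\iota_1\in\Hom(N_2,N_1)$ and $\iota_2\in\End(N_2)$. The hypothesis $\Hom(N_2,N_1)=0$ immediately forces $\iota_1=0$. The quotient condition $(N_1\oplus N_2)/U\cong N_1$ together with a dimension count then forces $\iota_2$ to be surjective, hence an isomorphism since $N_2$ is finite-dimensional. Consequently the image $U=0\oplus N_2$ is the same submodule for every choice of $\iota$, proving $F^{N_1\oplus N_2}_{N_1N_2}=1$.

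The main subtlety to keep in mind is that $F^M_{N_1N_2}$ literally counts submodules of $M$ (as subsets), not inclusions up to isomorphism, so even though there are many monomorphisms $N_2\hookrightarrow N_1\oplus N_2$ satisfying the cokernel condition (parametrized by $\Aut(N_2)$ in this case), they all have the same image. The role of the two hypotheses is complementary: $\Ext^1(N_1,N_2)=0$ collapses the ambient module $M$, while $\Hom(N_2,N_1)=0$ collapses the embedding of $N_2$ into that fixed $M$.
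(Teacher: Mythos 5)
Your proof is correct. The paper itself states this lemma without proof, citing \cite{Szanto}, and your two-step argument is precisely the standard one: the vanishing of $\Ext^1(N_1,N_2)$ splits every short exact sequence $0\to N_2\to M\to N_1\to 0$, so only $[M]=[N_1\oplus N_2]$ can carry a nonzero Hall number, and the vanishing of $\Hom(N_2,N_1)$ forces any embedded copy of $N_2$ with quotient $N_1$ to be exactly the canonical summand $0\oplus N_2$, giving $F^{N_1\oplus N_2}_{N_1N_2}=1$. One micro-simplification: once $\iota_1=0$, injectivity of $\iota$ already makes $\iota_2$ an injective endomorphism of the finite-dimensional module $N_2$, hence an isomorphism, so you do not need the quotient condition or a separate dimension count at that step; and you rightly flag the key point that Hall numbers count submodules as subsets, so the $\Aut(N_2)$-many monomorphisms all contribute the single submodule $0\oplus N_2$.
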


\begin{lemma} {\rm a)} If
$M$,$N$ and $L$ are in $\Mod kK\langle 1\rangle$, then
$F^L_{MN}=F^{\neg R^+_1(L)}_{\neg R^+_1(M)\neg R^+_1(N)}$.

{\rm b)} If $M$,$N$ and $L$ are in $\Mod kK\langle 2\rangle$, then
$F^L_{MN}=F^{R^-_1\neg(L)}_{R^-_1\neg(M)R^-_1\neg(N)}$.
\end{lemma}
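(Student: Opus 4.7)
The plan is to interpret $F^L_{MN}$ as a count of short exact sequences and then exploit the fact that $\neg R^+_1$ is a categorical equivalence $\Mod kK\langle 1\rangle \to \Mod kK\langle 2\rangle$ which, on these subcategories, preserves short exact sequences. First I would record the standard identity
\[
F^L_{MN}\cdot |\Aut(M)|\cdot |\Aut(N)| = \#\bigl\{(f,g) : 0 \to N \xrightarrow{f} L \xrightarrow{g} M \to 0 \text{ exact in } \Mod kK\bigr\},
\]
which holds because every submodule $U\leq L$ with $U\cong N$ and $L/U\cong M$ is recovered from such a pair as $U=\Ima f$, and exactly $|\Aut(N)|\cdot|\Aut(M)|$ pairs give the same $U$.

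The key step will be to show that for a short exact sequence $0\to N\to L\to M\to 0$ in $\Mod kK$ with all three terms in $\Mod kK\langle 1\rangle$, its image under $\neg R^+_1$ is again a short exact sequence. Since vertex $1$ is a sink in $K$, a Kronecker module $V=(V_1,V_2,\alpha,\beta)$ has no $S_1$ summand exactly when the structure map $\mu_V=(\alpha,\beta):V_2\oplus V_2\to V_1$ is surjective. I would then consider the commutative diagram whose rows are the exact sequences $0\to N_2\oplus N_2\to L_2\oplus L_2\to M_2\oplus M_2\to 0$ and $0\to N_1\to L_1\to M_1\to 0$ and whose vertical arrows are $\mu_N,\mu_L,\mu_M$; by hypothesis all three verticals are surjective, so the snake lemma gives an exact sequence $0\to\Ker\mu_N\to\Ker\mu_L\to\Ker\mu_M\to 0$. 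These kernels are precisely the new vertex-$1$ components under $R^+_1$, so this provides the desired short exact sequence at vertex $1$; at vertex $2$ the three modules are unchanged and exactness is immediate.

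With exactness established, I would use that $\neg R^+_1$ is a categorical equivalence (with quasi-inverse $R^-_1\neg$), so it induces a bijection between short exact sequences in $\Mod kK\langle 1\rangle$ of prescribed isomorphism type and analogous sequences in $\Mod kK\langle 2\rangle$, and moreover $|\Aut(X)| = |\Aut(\neg R^+_1 X)|$ for every $X$ in $\Mod kK\langle 1\rangle$. Substituting into the identity above on both sides yields part~(a). Part~(b) follows by the symmetric argument applied with $R^-_1\neg$ in place of $\neg R^+_1$.

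The main obstacle is the snake-lemma step: $R^+_1$ is not exact on the whole of $\Mod kK$ (it collapses $S_1$ summands), so one must genuinely use the surjectivity of $\mu_N,\mu_L,\mu_M$, which is precisely the hypothesis $M,N,L\in\Mod kK\langle 1\rangle$. Everything else is a formal consequence of having an equivalence of categories.
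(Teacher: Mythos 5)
Your argument is correct. Note that the paper itself does not prove this lemma at all --- it is quoted from \cite{Szanto} --- so there is no in-paper proof to compare against; your route (the Riedtmann-style count $F^L_{MN}\,|\Aut(M)|\,|\Aut(N)| = \#\{\text{exact pairs }(f,g)\}$, the snake lemma to show $\neg R^+_1$ carries short exact sequences with terms in $\Mod kK\langle 1\rangle$ to short exact sequences, and full faithfulness of the equivalence to transport both the exact pairs and the automorphism groups) is the standard argument, and you correctly isolate the only non-formal step: exactness is not automatic for an equivalence of non-abelian subcategories, and the surjectivity of the structure maps $\mu_V:V_2\oplus V_2\to V_1$, which characterizes membership in $\Mod kK\langle 1\rangle$ at the sink, is exactly what kills the cokernel term in the snake sequence. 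Two small refinements: first, the snake-lemma step needs only the surjectivity of $\mu_N$ (it is $\operatorname{coker}\mu_N$ that obstructs right-exactness of the kernel sequence), so the hypothesis on the subobject alone drives that step; second, for your bijection on exact pairs to be \emph{surjective} --- given an exact pair between $\neg R^+_1(N)$, $\neg R^+_1(L)$, $\neg R^+_1(M)$, fullness produces a preimage pair $(f,g)$, but you must still verify $(f,g)$ is exact --- you need the quasi-inverse $R^-_1\neg$ to preserve exactness as well, i.e.\ that $\neg R^+_1$ reflects exactness. This follows from the dual snake-lemma computation at the source vertex, where absence of the relevant simple summand is equivalent to injectivity of $V_1\to V_2\oplus V_2$ and injectivity for the quotient term kills the kernel term in the snake sequence. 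You invoke this ``symmetric argument'' only for part (b), but it is also needed inside part (a) at precisely this point; once stated, it simultaneously yields (b).
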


\section{\bf Identities for Gaussian coefficients}
For $l,a\in\Bbb Z$, $l>0$ we will denote by
$G^l_a(q)=\frac{(q^a-1)...(q^{a-l+1}-1)}{(q^l-1)...(q-1)}$ the
Gaussian (q-binomial) coefficients. By definition $G^0_a(q)=1$ and
$G^{-l}_a(q)=0$. The following properties of the Gaussian
coefficients are well known

\begin{lemma} {\rm a)} $G^l_a(q)=0$ for $0\leq a<l$. Also
$G^l_a=G^{a-l}_a$ for $a,l\geq 0$.

{\rm b)} (Cross product) For all $a,l,j\in\Bbb Z$ we have
$G^l_a(q)G^j_l(q)=G^j_a(q)G^{l-j}_{a-j}(q)$.

{\rm c)} (q-Vandermonde) For all $l,a,b\in\Bbb Z$ we have
$G^l_{a+b}(q)=\sum_{j\in\Bbb
Z}q^{j(a-l+j)}G^{l-j}_a(q)G^j_b(q)=\sum_{r\in\Bbb
Z}q^{(l-r)(a-r)}G^{r}_a(q)G^{l-r}_b(q)$. Notice that the sums are
finite.
\end{lemma}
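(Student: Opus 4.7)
The plan is to verify each identity directly from the definition. Writing $[n]_q! := (q^n-1)(q^{n-1}-1)\cdots(q-1)$ (with $[0]_q! = 1$), one has $G^l_a(q) = [a]_q!/([l]_q!\,[a-l]_q!)$ whenever $0 \le l \le a$, and I will rely on this compact form throughout.

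For (a), the first claim is immediate: when $0 \le a < l$, the numerator of $G^l_a(q)$ contains the factor $q^{a-a}-1 = 0$, since $0 \in \{a-l+1,\ldots,a\}$. The symmetry $G^l_a = G^{a-l}_a$ then reads off the factorial form. For (b), I would expand both sides:
\begin{equation*}
G^l_a(q)\,G^j_l(q) = \frac{[a]_q!}{[l]_q!\,[a-l]_q!}\cdot\frac{[l]_q!}{[j]_q!\,[l-j]_q!} = \frac{[a]_q!}{[j]_q!\,[l-j]_q!\,[a-l]_q!},
\end{equation*}
and check that $G^j_a(q)\,G^{l-j}_{a-j}(q)$ collapses to the same expression. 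The degenerate cases where some argument is negative or where $l > a$ are handled by part (a), which ensures both sides vanish simultaneously.

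The substantive part is (c). My plan is to apply the $q$-binomial theorem
\begin{equation*}
\prod_{i=0}^{n-1}(1+q^i x) \;=\; \sum_{k\ge 0} q^{k(k-1)/2}\, G^k_n(q)\, x^k
\end{equation*}
to the factorization $\prod_{i=0}^{a+b-1}(1+q^i x) = \prod_{i=0}^{a-1}(1+q^i x)\cdot\prod_{i=0}^{b-1}(1+q^{a+i}x)$. Extracting the coefficient of $x^l$ on each side and solving for $G^l_{a+b}(q)$ should yield the claimed sum, once the scalar $q^a$ inserted in the second product is absorbed into the exponent. The second form of the identity is then just the change of variable $r = l-j$. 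The extension of the summation index to all of $\Bbb Z$ is legitimate because of the vanishing from (a).

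The main obstacle I anticipate is bookkeeping of the exponents of $q$: matching $q^{j(a-l+j)}$ requires combining the triangular exponents $\binom{l-j}{2}$ and $\binom{j}{2}$ from the two factor expansions with the extra $q^{aj}$ coming from the shift $q^{a+i}$ in the second product, and then cancelling these against $\binom{l}{2}$ from the left-hand side. Once this exponent arithmetic is carried out carefully, the identity emerges directly.
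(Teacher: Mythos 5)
The paper offers no proof to compare against: it states Lemma 2.1 as ``well known.'' Your plan is the standard one, and your exponent bookkeeping in (c) does check out, since $\binom{l-j}{2}+\binom{j}{2}-\binom{l}{2}+aj=j(a-l+j)$. But as written the argument proves less than the lemma asserts, and the paper genuinely needs the extra generality: parts (b) and (c) are claimed for \emph{all} integers $a$ (resp.\ $a,b$), and are used that way downstream --- the proof of Theorem 3.2 applies (c) with upper entry $m-2b$, and the proof of Proposition 2.2 feeds entries like $m-\mu+\nu$ and $p+\mu-\nu$ into (b) and (c); all of these can be negative.

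Two concrete gaps. First, in (b) you dismiss the degenerate cases by saying part (a) ``ensures both sides vanish simultaneously,'' but for $a<0$ neither side vanishes: with $a=-1$, $l=1$, $j=0$ one gets $G^1_{-1}(q)G^0_1(q)=-q^{-1}=G^0_{-1}(q)G^1_{-1}(q)$. The repair is cheap: for $0\le j\le l$ use the paper's defining falling-factorial form $G^l_a(q)=\frac{(q^a-1)\cdots(q^{a-l+1}-1)}{(q^l-1)\cdots(q-1)}$, valid for \emph{every} $a\in\Bbb Z$; then both $G^l_a(q)G^j_l(q)$ and $G^j_a(q)G^{l-j}_{a-j}(q)$ reduce by direct cancellation to $(q^a-1)\cdots(q^{a-l+1}-1)$ divided by $(q^j-1)\cdots(q-1)\cdot(q^{l-j}-1)\cdots(q-1)$, while the remaining cases ($j<0$, $j>l$, or $l<0$) do vanish on both sides as you say. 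Second, in (c) the factorization $\prod_{i=0}^{a+b-1}(1+q^ix)=\prod_{i=0}^{a-1}(1+q^ix)\cdot\prod_{i=0}^{b-1}(1+q^{a+i}x)$ presupposes $a,b\ge 0$; your argument never touches negative $a$ or $b$. To close this, fix $l\ge 0$ (for $l<0$ every term on both sides is $0$) and observe that both sides are polynomials over $\Bbb Q(q)$ in $x=q^a$ and $y=q^b$ --- note $q^{j(a-l+j)}=q^{j(j-l)}x^j$, and each $G^{l-j}_a(q)$, $G^j_b(q)$ is polynomial in $x$, resp.\ $y$, by the falling-factorial form --- which agree at the infinitely many values $x\in\{1,q,q^2,\dots\}$, $y\in\{1,q,q^2,\dots\}$ by your $q$-binomial computation, hence agree identically, hence for all integers $a,b$. (A smaller instance of the same oversight sits in (a): for $a,l\ge0$ with $l>a$ the symmetry $G^l_a=G^{a-l}_a$ does not come from the factorial form, which is unavailable there, but from the left side vanishing by the first claim and the right side vanishing by the convention $G^{-m}_a=0$.)
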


Finally we will prove a q-analogue of the so called Nanjundiah
identity (see \cite{Nan})

\begin{proposition} For all $m,p,\mu,\nu\in\Bbb Z$ we have
$$\sum_{r\in\Bbb Z}
q^{(m-\mu+\nu-r)(p-r)}G^r_{m-\mu+\nu}(q)G^{p-r}_{p+\mu-\nu}(q)G^{m+p}_{\mu+r}(q)=G^m_{\mu}(q)G^p_{\nu}(q)$$
\end{proposition}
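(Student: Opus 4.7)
The plan is to reduce the left hand side to the right by two paired applications of $q$-Vandermonde and the cross product of Lemma~2.1. Throughout I would set $A=m-\mu+\nu$ and $B=p+\mu-\nu$, so $A+B=m+p$ and the left hand side reads
$$\sum_r q^{(A-r)(p-r)}G^r_A\,G^{p-r}_B\,G^{m+p}_{\mu+r}.$$

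First I would expand $G^{m+p}_{\mu+r}$ via the first form of $q$-Vandermonde (Lemma~2.1(c)) applied to the splitting $\mu+r=\mu+r$, giving
$$G^{m+p}_{\mu+r}=\sum_j q^{j(\mu-m-p+j)}G^{m+p-j}_{\mu}\,G^j_r.$$
Substituting into the left hand side yields a double sum in $r,j$ containing the factor $G^r_A\,G^j_r$, which by the cross product of Lemma~2.1(b) equals $G^j_A\,G^{r-j}_{A-j}$. Reindexing by $s=r-j$, the exponent rearranges as $(A-j-s)(p-j-s)+j(\mu-m-p+j)$, so that the inner $s$-sum is a second $q$-Vandermonde:
$$\sum_s q^{(A-j-s)(p-j-s)}G^s_{A-j}\,G^{p-j-s}_B=G^{p-j}_{m+p-j},$$
valid because $(A-j)+B=m+p-j$.

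Next I would apply the symmetry $G^{p-j}_{m+p-j}=G^m_{m+p-j}$ from Lemma~2.1(a) and invoke the cross product once more to rewrite $G^m_{m+p-j}\,G^{m+p-j}_{\mu}=G^m_{\mu}\,G^{p-j}_{\mu-m}$. Factoring $G^m_\mu$ out of the remaining $j$-sum leaves
$$\sum_j q^{j(\mu-m-p+j)}G^j_A\,G^{p-j}_{\mu-m},$$
and a final application of $q$-Vandermonde, now using $A+(\mu-m)=\nu$, identifies this sum with $G^p_\nu$, completing the proof.

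The main obstacle I anticipate is guessing the right initial splitting of $G^{m+p}_{\mu+r}$: choosing $\mu+r=\mu+r$ (rather than $A+(\mu+r-A)$ or other tempting partitions of the lower index) is what makes the cross product with $G^r_A$ available, and it is precisely this choice that allows the concluding $q$-Vandermonde to recognise the cancellation $(\mu-m)+A=\nu$ and recover $G^p_\nu$. Once this opening move is fixed, the rest reduces to routine bookkeeping on exponents and Gaussian indices.
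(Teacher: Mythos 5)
Your argument reproduces, move for move, the paper's own proof: expand $G^{m+p}_{\mu+r}$ by $q$-Vandermonde with the split $\mu+r$, absorb $G^r_{m-\mu+\nu}G^j_r$ via the cross product, collapse the inner sum by a second Vandermonde to $G^{p-j}_{m+p-j}$, then symmetry plus one more cross product to extract $G^m_\mu$, and a closing Vandermonde (using $(m-\mu+\nu)+(\mu-m)=\nu$) to produce $G^p_\nu$. Your exponent bookkeeping at each of these stages is correct and matches the paper's computation exactly.

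The one genuine gap is the symmetry step. Lemma 2.1(a) gives $G^l_a=G^{a-l}_a$ only for $a,l\ge 0$, and as a termwise identity $G^{p-j}_{m+p-j}=G^m_{m+p-j}$ is false for general integer parameters: for instance $m=-1$, $j=p$ gives $G^0_{-1}=1$ while $G^{-1}_{-1}=0$, since a negative superscript forces the value $0$ by definition whereas a negative subscript does not. The paper spends a visible portion of its proof on exactly this point: it first disposes of $p<0$ (both sides vanish), then uses the intermediate expression after the second Vandermonde to dispose of $m<0$, and for $m,p\ge 0$ it restricts the sum to $0\le j\le p$ --- where the hypotheses of Lemma 2.1(a) hold --- before re-extending over $\Bbb Z$. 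Your blanket appeal to the symmetry for all $j\in\Bbb Z$ and arbitrary $m,p,\mu,\nu\in\Bbb Z$ therefore needs repair. It is repairable, even without the paper's case split: whenever the termwise symmetry fails ($j<0$, $j>p$, or $m<0$), one of the companion factors $G^j_{m-\mu+\nu}$ or $G^{m+p-j}_\mu$ in your summand has a negative superscript and kills the term, so the replacement never changes the value of the sum; but this verification (or the paper's case analysis) must be stated, and as written your proof omits it.
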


\begin{proof} Denote by $A$ the left expression and by $B$ the right one.

One can immediately see that for $p<0$ we have $A=B=0$.

Applying 3 times Lemma 2.1. c) and  2 times Lemma 2.1. b) we have
$$A=\sum_{r\in\Bbb Z}
q^{(m-\mu+\nu-r)(p-r)}G^r_{m-\mu+\nu}(q)G^{p-r}_{p+\mu-\nu}(q)G^{m+p}_{\mu+r}(q)$$$$=\sum_r
q^{(m-\mu+\nu-r)(p-r)}G^r_{m-\mu+\nu}(q)G^{p-r}_{p+\mu-\nu}(q)\sum_{s\in\Bbb
Z} q^{s(\mu-m-p+s)} G^{m+p-s}_{\mu}(q)G^s_r(q)$$$$=\sum_{r,s\in\Bbb
Z}q^{s(\mu-m-p+s)}
q^{(m-\mu+\nu-r)(p-r)}G^r_{m-\mu+\nu}(q)G^s_r(q)G^{p-r}_{p+\mu-\nu}(q)
G^{m+p-s}_{\mu}(q)$$$$=\sum_{r,s\in\Bbb Z}q^{s(\mu-m-p+s)}
q^{(m-\mu+\nu-r)(p-r)}G^s_{m-\mu+\nu}(q)G^{r-s}_{m-\mu+\nu-s}(q)G^{p-r}_{p+\mu-\nu}(q)
G^{m+p-s}_{\mu}(q)$$$$=\sum_{s\in\Bbb
Z}q^{s(\mu-m-p+s)}G^s_{m-\mu+\nu}(q)G^{m+p-s}_{\mu}(q)\sum_{r\in\Bbb
Z}
q^{(m-\mu+\nu-r)(p-r)}G^{r-s}_{m-\mu+\nu-s}(q)G^{p-r}_{p+\mu-\nu}(q)$$
$$=\sum_{s\in\Bbb Z}q^{s(\mu-m-p+s)}G^s_{m-\mu+\nu}(q)G^{m+p-s}_{\mu}(q)\sum_{t\in\Bbb
Z}
q^{(m-\mu+\nu-s-t)(p-s-t)}G^{t}_{m-\mu+\nu-s}(q)G^{p-s-t}_{p+\mu-\nu}(q)$$$$=\sum_{s\in\Bbb
Z}q^{s(\mu-m-p+s)}G^s_{m-\mu+\nu}(q)G^{m+p-s}_{\mu}(q)G^{p-s}_{m+p-s}(q)$$
$$=\sum_{s\in\Bbb Z}q^{s(\mu-m-p+s)}G^s_{m-\mu+\nu}(q)G^{p-s}_{\mu}(q)G^{m}_{\mu-p+s}(q)$$

One can see from here that for $m<0$ we have $A=0$ and trivially
also $B=0$.

Consider now the case $m,p\geq 0$. Then using Lemma 2.1. a),b)
notice that
$$A=\sum_{s\in\Bbb Z}q^{s(\mu-m-p+s)}G^s_{m-\mu+\nu}(q)G^{m+p-s}_{\mu}(q)G^{p-s}_{m+p-s}(q)$$
$$=\sum_{s=0}^pq^{s(\mu-m-p+s)}G^s_{m-\mu+\nu}(q)G^{m+p-s}_{\mu}(q)G^{p-s}_{m+p-s}(q)$$
$$=\sum_{s=0}^pq^{s(\mu-m-p+s)}G^s_{m-\mu+\nu}(q)G^{m+p-s}_{\mu}(q)G^{m}_{m+p-s}(q)$$
$$=\sum_{s\in\Bbb Z}q^{s(\mu-m-p+s)}G^s_{m-\mu+\nu}(q)G^{m+p-s}_{\mu}(q)G^{m}_{m+p-s}(q)$$
$$=\sum_{s\in\Bbb Z}q^{s(\mu-m-p+s)}G^s_{m-\mu+\nu}(q)G^{m}_{\mu}(q)G^{p-s}_{\mu-m}(q)$$
$$=G^{m}_{\mu}(q)\sum_{s\in\Bbb Z}q^{s(\mu-m-p+s)}G^s_{m-\mu+\nu}(q)G^{p-s}_{\mu-m}(q)$$
$$=G^{m}_{\mu}(q)G^{p}_{\nu}(q)=B$$
\end{proof}
\section{\bf The recursions}
Let $a,b\in\Bbb Z$. We introduce the following notations.

For $M\in\Mod kK$
$$A^M_{a,b}:=|Gr_{(a,b)}(M)|=\sum_{\begin{tiny}\begin{array}{c}[X],[Y]\\\underline{dim}Y=(a,b)\end{array}\end{tiny}}F^M_{XY}$$

For $M\in \Mod kK\langle 1\rangle$
$$B^M_{a,b}:=\sum_{\begin{tiny}\begin{array}{c}[X],[Y]\\\underline{dim}Y=(a,b)\\X,Y\in \Mod kK\langle 1\rangle\end{array}\end{tiny}}F^M_{XY}$$

For $M\in \Mod kK\langle 2\rangle$
$$C^M_{a,b}:=\sum_{\begin{tiny}\begin{array}{c}[X],[Y]\\\underline{dim}Y=(a,b)\\X,Y\in \Mod kK\langle 2\rangle\end{array}\end{tiny}}F^M_{XY}$$

The sums $A^M_{a,b}$, $B^M_{a,b}$, $C^M_{a,b}$ are considered to be
0 if they are empty. In particular they are 0 if $a<0$ or $b<0$.
Also notice that if $\underline{dim}M=(m,n)$ then
$A^M_{a,b}=B^M_{a,b}=C^M_{a,b}=0$ for $a>m$ or $b>n$.

\begin{proposition} Suppose up to isomorphism $M=sS_1\oplus M'\oplus tS_2$ with $M'\in\Mod kK\langle 1\rangle\cap\Mod kK\langle
2\rangle$. Let $a,b\in\Bbb Z$, $l=a-b$ and $\underline{dim}M=(m,n)$.

{\rm a)} We have that
$$A^M_{a,b}=\sum_{c\in\Bbb Z} G^c_{m-a+c}(q)B^{M'\oplus tS_2}_{a-c,b}=\sum_{c\in\Bbb Z}
G^c_{m-a+c}(q)C^{\neg R^+_1(M'\oplus tS_2)}_{a-l,b-l+c},$$ the sum
being finite.

{\rm b)} We have that
$$A^M_{a,b}=\sum_{d\in\Bbb Z} G^d_{b+d}(q)C^{sS_1\oplus M'}_{a,b+d}=\sum_{d\in\Bbb Z} G^d_{b+d}(q)B^{R^-_1\neg(sS_1\oplus M')}_{a+l-d,b+l},$$ the sum being finite.
\end{proposition}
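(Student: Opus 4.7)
The plan is to prove part (a) by a direct combinatorial bijection between submodules of $M$ of dimension $(a,b)$ and pairs $(V,W)$, and then to run the mirror construction for part (b); the second equality in each part will follow termwise from Lemma 1.4.

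For the first equality of (a), I would associate to each submodule $Y\subseteq M=sS_1\oplus N$ (with $N=M'\oplus tS_2\in\Mod kK\langle 1\rangle$) the canonical submodule $V\subseteq N$ given by $V_2:=Y_2$ and $V_1:=\alpha(Y_2)+\beta(Y_2)$; the containments make sense because $\alpha$ and $\beta$ vanish on the $sS_1$-summand and hence send $Y_2$ into $N_1$. The equality $V_1=\alpha(V_2)+\beta(V_2)$ forces $V\in\Mod kK\langle 1\rangle$, and because $N\in\Mod kK\langle 1\rangle$ means $N_1=\alpha(N_2)+\beta(N_2)\supseteq V_1$, the quotient $N/V$ is also in $\Mod kK\langle 1\rangle$, so $V$ is of exactly the kind counted by $B^N$. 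With $c:=a-\dim_k V_1$, the supplementary datum $W:=Y_1/V_1$ is a $c$-dimensional subspace of $M_1/V_1$, and the construction is reversible: given any $V$ counted by $B^N_{a-c,b}$ and any $c$-dimensional $W\subseteq M_1/V_1$, set $Y_2:=V_2$ and take $Y_1$ to be the preimage of $W$ under $M_1\twoheadrightarrow M_1/V_1$; this is a submodule since $\alpha(Y_2)+\beta(Y_2)=V_1\subseteq Y_1$. Because $\dim_k(M_1/V_1)=m-(a-c)=m-a+c$, the number of $W$'s per $V$ is $G^c_{m-a+c}(q)$, and summing over $c$ produces the first identity.

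The second equality in (a) comes from applying Lemma 1.4(a) to each Hall number appearing in $B^N_{a-c,b}$: all arguments lie in $\Mod kK\langle 1\rangle$, so $F^N_{XV}=F^{\neg R^+_1(N)}_{\neg R^+_1(X),\neg R^+_1(V)}$, and the Remark after Lemma 1.1 gives the dimension vector transport $(a-c,b)\mapsto(b,2b-(a-c))=(a-l,b-l+c)$ with $l=a-b$, rewriting the sum as $C^{\neg R^+_1(N)}_{a-l,b-l+c}$. For part (b) I would run the analogous ``maximal compatible'' construction with $M=P\oplus tS_2$ and $P=sS_1\oplus M'\in\Mod kK\langle 2\rangle$: associate to $Y\subseteq M$ the submodule $V\subseteq P$ defined by $V_1:=Y_1$ and $V_2:=\alpha^{-1}(Y_1)\cap\beta^{-1}(Y_1)\cap P_2$. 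Maximality of $V_2$ relative to $V_1$ forces both $V\in\Mod kK\langle 2\rangle$ and $P/V\in\Mod kK\langle 2\rangle$, placing $V$ in the count of $C^P$, and the Gaussian factor $G^d_{b+d}(q)$ encodes the remaining freedom in the $S_2$-direction (the $b$-dimensional $Y_2$ sitting inside the appropriate vertex-$2$ ambient space). The second equality in (b) is then a mirror application of Lemma 1.4(b) through $R^-_1\neg$, with the Remark giving the shift $(a,b+d)\mapsto(2a-(b+d),a)=(a+l-d,b+l)$.

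The hard part will be verifying bijectivity and matching the Gaussian count exactly. For (a) one must confirm that every $V$ counted by $B^N$ actually arises from some $Y$, not only that the $V$ built from $Y$ has the right summand properties; for (b) one must align the direction of the canonical construction (maximal rather than minimal) with the count $G^d_{b+d}(q)$ and carefully manage how the $tS_2$-summand of $M$ interacts with the construction of $V$ inside $P$, since $V$ by itself does not see the $tS_2$-component of $Y$.
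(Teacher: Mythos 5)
Your part (a) is correct, and it takes a genuinely different route from the paper. The paper stays entirely inside the Hall algebra: it splits the submodule as $Y=cS_1\oplus Y_c$ with $Y_c\in\Mod kK\langle 1\rangle$, uses Lemma 1.4 to see $F^Z_{cS_1Y_c}$ is $1$ exactly for $[Z]=[cS_1\oplus Y_c]$, slides $cS_1$ to the other side by associativity (Lemma 1.3), and identifies $\sum_{[X]}F^Z_{X\,cS_1}=G^c_{m-a+c}(q)$ as the number of $c$-dimensional subspaces of $Z_1$. Your bijection $Y\leftrightarrow(V,W)$ with $V_2=Y_2$, $V_1=\alpha(Y_2)+\beta(Y_2)$, $W=Y_1/V_1$ is the geometric shadow of that computation; it is more elementary (no Hall-algebra identities needed) and makes the side conditions in $B^{M'\oplus tS_2}$ transparent: $V\in\Mod kK\langle 1\rangle$ by construction, and the quotient condition is automatic because $\Mod kK\langle 1\rangle$ is closed under quotients. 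Your second equality is the same as the paper's, via the reflection functor, Remark 1.2 for the dimension transport, and what is Lemma 1.5 in the paper (you cite it as Lemma 1.4, a harmless slip).

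Part (b), however, has a genuine gap, and it sits exactly where you flagged it (``how the $tS_2$-summand of $M$ interacts with the construction of $V$''). Carry out your own construction: for $Y\subseteq M=P\oplus tS_2$ with $P=sS_1\oplus M'$, set $V_1=Y_1$ and $V_2=\alpha^{-1}(Y_1)\cap\beta^{-1}(Y_1)\cap P_2$. (Minor point: $V\in\Mod kK\langle 2\rangle$ is automatic for \emph{any} submodule of $P$, since $\Ker\alpha_P\cap\Ker\beta_P=0$; maximality of $V_2$ is what gives $P/V\in\Mod kK\langle 2\rangle$.) Since $\alpha,\beta$ kill the $tS_2$-summand, the fibre over a fixed $V$ with $\underline{\dim}V=(a,b+d)$ consists of \emph{all} $b$-dimensional subspaces $Y_2\subseteq V_2\oplus k^t$, an ambient space of dimension $(b+d)+t$, not $b+d$. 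So your bijection, completed honestly, yields
$$A^M_{a,b}=\sum_{d\in\Bbb Z}G^{b}_{b+d+t}(q)\,C^{sS_1\oplus M'}_{a,b+d}=\sum_{d\in\Bbb Z}G^{d}_{b+d}(q)\,C^{sS_1\oplus M'}_{a,b+d-t},$$
which agrees with the stated identity only when $t=0$. This is not an artifact of your method: the stated identity is false for $t>0$. Take $M=S_2$ ($s=0$, $M'=0$, $t=1$) and $(a,b)=(0,1)$: then $A^{S_2}_{0,1}=1$, while in $\sum_d G^d_{1+d}(q)C^{0}_{0,1+d}$ the only term with $C\neq 0$ is $d=-1$, whose coefficient is $G^{-1}_{0}(q)=0$. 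Similarly $M=R_p(1)\oplus S_2$ with $(a,b)=(1,1)$ gives $q+1$ on the left but $1$ on the right; the same $t$-shift is needed in the $B^{R^-_1\neg(sS_1\oplus M')}$ form (index $a+l-d+t$). The source of the asymmetry with (a) is that $A$, $B$, $C$ are all indexed by the dimension vector of the \emph{submodule}: in (a) the stripped summand $cS_1$ sits in the sub, so the index records it, while the $sS_1$-stripping happens on the quotient side and is invisible; in (b) the $dS_2$-stripping happens on the quotient, but the passage from $F^M_{X'Z}$ to $F^{sS_1\oplus M'}_{X'Z''}$ removes $tS_2$ from the submodule $Z$ and shifts its second coordinate by $t$. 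The paper proves (b) only with the words ``dual of a)'', and its uses toward the closed formulas of Section 4 all have $t=0$, so those theorems are unaffected; but as written, neither your sketch nor the paper's statement is correct for $t>0$, and your write-up should either restrict to $t=0$ or prove the shifted identity displayed above.
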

\begin{proof} a) If $b<0$ then trivially $$A^M_{a,b}=\sum_{c\in\Bbb Z} G^c_{m-a+c}(q)B^{M'\oplus tS_2}_{a-c,b}=\sum_{c\in\Bbb Z}
G^c_{m-a+c}(q)C^{\neg R^+_1(M'\oplus tS_2)}_{b,b-l+c}=0.$$ If $a<0$
then $A^M_{a,b}=0$. The sum $\sum_{c\in\Bbb Z}
G^c_{m-a+c}(q)B^{M'\oplus tS_2}_{a-c,b}=0$ because for $c<0$
$G^c_{m-a+c}(q)=0$ and for $c\geq 0$ we have $a-c<0$ so $B^{M'\oplus
tS_2}_{a-c,b}=0$. We also have $\sum_{c\in\Bbb Z}
G^c_{m-a+c}(q)C^{\neg R^+_1(M'\oplus tS_2)}_{b,2b-a+c}=0$ because
for $c<0$ $G^c_{m-a+c}(q)=0$ and for $c\geq 0$ there is no $Y\in\Mod
kK\langle 2\rangle$ with dimension $(b,2b-a+c)$ (see Remark 1.2.) so
$C^{\neg R^+_1(M'\oplus tS_2)}_{a-l,b-l+c}=0$.

Consider now the case $a,b\geq 0$. Firstly notice that if $Y_c\in
\Mod kK\langle 1\rangle$ then by Lemma 1.1. and Lemma 1.4. we have
$[cS_1][Y_c]=[cS_1\oplus Y_c]$, so $F^{Z}_{cS_1 Y_c}=1$ for
$[Z]=[cS_1\oplus Y_c]$ and $F^{Z}_{cS_1 Y_c}=0$ in all the other
cases.

Using Lemma 1.3. we obtain:
$$A^M_{a,b}=\sum_{\begin{tiny}\begin{array}{c}[X],[Y]\\\underline{dim}Y=(a,b)\end{array}\end{tiny}}F^M_{XY}=\sum_{c\geq 0}\sum_{\begin{tiny}\begin{array}{c}[X],[Y_c]\\\underline{dim}Y_c=(a-c,b)\\Y_c\in
\Mod kK\langle 1\rangle\end{array}\end{tiny}}F^M_{X cS_1\oplus
Y_c}\cdot 1$$$$=\sum_{c\geq
0}\sum_{\begin{tiny}\begin{array}{c}[X],[Y_c],[Z]\\\underline{dim}Y_c=(a-c,b)\\Y_c\in
\Mod kK\langle 1\rangle\end{array}\end{tiny}}F^M_{X Z}\cdot
F^{Z}_{cS_1 Y_c}=\sum_{c\geq
0}\sum_{\begin{tiny}\begin{array}{c}[X],[Y_c],[Z]\\\underline{dim}Y_c=(a-c,b)\\Y_c\in
\Mod kK\langle 1\rangle\end{array}\end{tiny}}F^Z_{X\ cS_1}\cdot
F^{M}_{Z Y_c}$$$$=\sum_{c\geq
0}\sum_{\begin{tiny}\begin{array}{c}[Y_c],[Z]\\\underline{dim}Y_c=(a-c,b)\\\underline{dim}Z=(m-a+c,n-b)
\\Y_c\in \Mod kK\langle
1\rangle\end{array}\end{tiny}}(\sum_{[X]}F^Z_{X\ cS_1})\cdot
F^{M}_{Z Y_c}=\sum_{c\geq
0}G^c_{m-a+c}(q)\sum_{\begin{tiny}\begin{array}{c}[Y_c],[Z]\\\underline{dim}Y_c=(a-c,b)
\\Y_c\in \Mod kK\langle 1\rangle\end{array}\end{tiny}}F^{M}_{Z
Y_c}$$$$=\sum_{c\geq
0}G^c_{m-a+c}(q)\sum_{\begin{tiny}\begin{array}{c}[Y_c],[Z']\\\underline{dim}Y_c=(a-c,b)
\\Y_c,Z'\in \Mod kK\langle 1\rangle\end{array}\end{tiny}}F^{M'\oplus
tS_2}_{Z' Y_c}=\sum_{c\in\Bbb Z} G^c_{m-a+c}(q)B^{M'\oplus
tS_2}_{a-c,b}$$

Here we have used the following: if $F^M_{ZY_c}=F^{sS_1\oplus
M'\oplus tS_2}_{ZY_c}\neq 0$ then since $Y_c\in\Mod kK\langle
1\rangle$ it follows that $Y_c$ embeds only in $M'\oplus tS_2$ (see
Lemma 1.1.), so $Z=sS_1\oplus Z'$ with $0\to Y_c\to M'\oplus tS_2\to
Z'\to 0$ exact, $Z'\in\Mod kK\langle 1\rangle$ (because $M'\oplus
tS_2$ does not project on $S_1$) and in this way $F^{sS_1\oplus
M'\oplus tS_2}_{ZY_c}=F^{M'\oplus tS_2}_{Z'Y_c}$.

To prove the other identity we will use reflection functors. Using
Remark 1.2. and  Lemma 1.5. we have $$A^M_{a,b}=\sum_{c\geq
0}G^c_{m-a+c}(q)\sum_{\begin{tiny}\begin{array}{c}[Y_c],[Z']\\\underline{dim}Y_c=(a-c,b)
\\Y_c,Z'\in \Mod kK\langle 1\rangle\end{array}\end{tiny}}F^{M'\oplus
tS_2}_{Z' Y_c}$$$$=\sum_{c\geq
0}G^c_{m-a+c}(q)\sum_{\begin{tiny}\begin{array}{c}[Y_c],[Z']\\\underline{dim}Y_c=(a-c,b)
\\Y_c,Z'\in \Mod kK\langle 1\rangle\end{array}\end{tiny}}F^{\neg
R^+_1(M'\oplus tS_2)}_{\neg R^+_1(Z') \neg
R^+_1(Y_c)}$$$$=\sum_{c\geq
0}G^c_{m-a+c}(q)\sum_{\begin{tiny}\begin{array}{c}[Y'_c],[Z'']\\\underline{dim}Y'_c=(a-l,b-l+c)
\\Y'_c,Z''\in \Mod kK\langle 2\rangle\end{array}\end{tiny}}F^{\neg
R^+_1(M'\oplus tS_2)}_{Z''Y'_c}$$$$=\sum_{c\in\Bbb Z}
G^c_{m-a+c}(q)C^{\neg R^+_1(M'\oplus tS_2)}_{a-l,b-l+c}.$$

b) dual of a).
\end{proof}

We can state now the recursion theorem for the numbers $A^M_{a,b}$.

\begin{theorem} Suppose up to isomorphism $M=sS_1\oplus M'\oplus tS_2$ with $M'\in\Mod kK\langle 1\rangle\cap\Mod kK\langle
2\rangle$. Let $a,b\in\Bbb Z$, $l=a-b$ and $\underline{dim}M=(m,n)$.
We have the following recursions

a) $$A^M_{a,b}=\sum_{c\in\Bbb Z} q^{c(b-l+c)}G^c_{m-2b}(q)A^{\neg
R^+_1(M'\oplus tS_2)}_{a-l,b-l+c},$$ the sum being finite.

b) $$A^M_{a,b}=\sum_{d\in\Bbb Z}
q^{d(2m-n+t-a-l+d)}G^d_{2a-2m+n-t}(q)A^{R^-_1\neg(sS_1\oplus
M')}_{a+l-d,b+l},$$ the sum being finite.
\end{theorem}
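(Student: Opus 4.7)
The plan is to upgrade Proposition 3.1 from a relation between $A^M$ and a $C$-quantity (respectively $B$-quantity) to a recursion purely between $A$-values, by applying the $q$-Vandermonde identity (Lemma 2.1 c)) to rewrite the Gaussian coefficient of Proposition 3.1 so that a second application of Proposition 3.1 to the reflected module can be recognized. The Nanjundiah identity of Proposition 2.2 is not needed here.

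For part a), I would start from the second equality of Proposition 3.1 a),
\[
A^M_{a,b} = \sum_{c\in\Bbb Z} G^c_{m-a+c}(q)\, C^{N}_{a-l,\,b-l+c}, \qquad N := \neg R^+_1(M'\oplus tS_2),
\]
and apply Lemma 2.1 c) to $G^c_{m-a+c}(q)$ via the splitting $m-a+c=(m-2b)+(2b-a+c)$, which yields
\[
G^c_{m-a+c}(q) = \sum_{J\in\Bbb Z} q^{J(2b-a+J)}\, G^{c-J}_{2b-a+c}(q)\, G^{J}_{m-2b}(q).
\]
Substituting into the previous display and reindexing the inner sum by $c':=c-J$ (using $b-l=2b-a$) factors the double sum as
\[
A^M_{a,b} = \sum_{J} q^{J(b-l+J)}\, G^J_{m-2b}(q)\, \sum_{c'} G^{c'}_{(b-l+J)+c'}(q)\, C^N_{a-l,\,(b-l+J)+c'}.
\]
Since $N\in\Mod kK\langle 2\rangle$, its decomposition has trivial $S_2$-summand part, so Proposition 3.1 b) applied to $N$ with indices $(a-l,\,b-l+J)$ identifies the inner sum as $A^{N}_{a-l,\,b-l+J}$, and renaming $J\to c$ gives the claimed recursion. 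Finiteness follows from $G^J_{m-2b}(q)=0$ outside $0\le J\le m-2b$ together with the vanishing of the $A^N$-values outside the relevant range.

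Part b) is entirely dual. I would start from the second equality of Proposition 3.1 b), apply Lemma 2.1 c) to $G^d_{b+d}(q)$ via the splitting $b+d=(2a-2m+n-t)+((2m-n+t)-a-l+d)$, and recognize the inner sum as what Proposition 3.1 a) applied to $L:=R^-_1\neg(sS_1\oplus M')\in\Mod kK\langle 1\rangle$ produces at indices $(a+l-d,\,b+l)$. By Remark 1.2, $L$ has dimension $(2m-n+t,\,m)$, so $2m-n+t$ is exactly the first dimension of $L$ appearing in the Gaussian lower index; the $q$-Vandermonde exponent collapses to $d(2m-n+t-a-l+d)$, matching the statement. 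The only real obstacle throughout is the index bookkeeping: one must choose the correct splitting of the lower Gaussian index and keep $l=a-b$ and the reflected-module dimensions consistently in play, but no conceptual difficulty arises once the right splitting is identified.
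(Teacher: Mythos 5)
Your proposal is correct and is essentially the paper's own proof read in the opposite direction: where you expand $G^c_{m-a+c}(q)$ (resp.\ $G^d_{b+d}(q)$) by $q$-Vandermonde and then recognize Proposition 3.1 applied to the reflected module, the paper starts from the claimed right-hand side, substitutes Proposition 3.1 b) for $A^{\neg R^+_1(M'\oplus tS_2)}$ (resp.\ Proposition 3.1 a) for $A^{R^-_1\neg(sS_1\oplus M')}$), and collapses the resulting double sum with the same Lemma 2.1 c) to recover the expansion of $A^M_{a,b}$ from Proposition 3.1. The ingredients are identical --- both parts of Proposition 3.1 plus $q$-Vandermonde, with Proposition 2.2 indeed playing no role here --- so this is the same argument up to the direction of the manipulation.
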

\begin{proof} a) Using the previous proposition and the fact that $\neg R^+_1(M'\oplus tS_2)\in\Mod kK\langle
2\rangle$ we have
$$A^M_{a,b}=\sum_{c\in\Bbb Z}
G^c_{m-a+c}(q)C^{\neg R^+_1(M'\oplus tS_2)}_{a-l,b-l+c},$$
$$A^{\neg R^+_1(M'\oplus tS_2)}_{a-l,b-l+c}=\sum_{d\in\Bbb Z}
G^d_{b-l+c+d}(q)C^{\neg R^+_1(M'\oplus tS_2)}_{a-l,b-l+c+d}.$$ Let
$u=c+d$. Using Lemma 2.1. c)
$$\sum_{c\in\Bbb Z} q^{c(b-l+c)}G^c_{m-2b}(q)A^{\neg
R^+_1(M'\oplus tS_2)}_{a-l,b-l+c}=\sum_{c,d\in\Bbb Z}
q^{c(b-l+c)}G^c_{m-2b}(q)G^d_{b-l+c+d}(q)C^{\neg R^+_1(M'\oplus
tS_2)}_{a-l,b-l+c+d}$$$$=\sum_{u\in\Bbb Z}(\sum_{c\in\Bbb Z}
q^{c(b-l+c)}G^c_{m-2b}(q)G^{u-c}_{b-l+u}(q))C^{\neg R^+_1(M'\oplus
tS_2)}_{a-l,b-l+u}=\sum_{u\in\Bbb Z} G^u_{m-a+u}(q)C^{\neg
R^+_1(M'\oplus tS_2)}_{a-l,b-l+u}=A^M_{a,b}.$$

b)Using the previous proposition, Remark 1.2. and the fact that
$R^-_1\neg(sS_1\oplus M')\in\Mod kK\langle 1\rangle$ we have
$$A^M_{a,b}=\sum_{d\in\Bbb Z} G^d_{b+d}(q)B^{R^-_1\neg(sS_1\oplus M')}_{a+l-d,b+l},$$
$$A^{R^-_1\neg(sS_1\oplus
M')}_{a+l-d,b+l}=\sum_{c\in\Bbb Z}
G^c_{2m-n+t-a-l+d+c}(q)B^{R^-_1\neg(sS_1\oplus M')}_{a+l-d-c,b+l}.$$
Let $u=c+d$. Using Lemma 2.1. c)
$$\sum_{d\in\Bbb Z}
q^{d(2m-n+t-a-l+d)}G^d_{2a-2m+n-t}(q)A^{R^-_1\neg(sS_1\oplus
M')}_{a+l-d,b+l}$$$$=\sum_{c,d\in\Bbb Z}
q^{d(2m-n+t-a-l+d)}G^d_{2a-2m+n-t}(q)G^c_{2m-n+t-a-l+d+c}(q)B^{R^-_1\neg(sS_1\oplus
M')}_{a+l-d-c,b+l}$$$$=\sum_{u\in\Bbb Z}(\sum_{d\in\Bbb Z}
q^{d(2m-n+t-a-l+d)}G^d_{2a-2m+n-t}(q)G^{u-d}_{2m-n+t-a-l+u}(q))B^{R^-_1\neg(sS_1\oplus
M')}_{a+l-u,b+l}$$$$=\sum_{u\in\Bbb Z}
G^u_{b+u}(q)B^{R^-_1\neg(sS_1\oplus M')}_{a+l-u,b+l}=A^M_{a,b}.$$
\end{proof}

\section{\bf Formulas for the cardinalities $A^{M}_{a,b}=|Gr_{(a,b)}(M)|$ with $M$
indecomposable}

Using the recurrences from the previous section we will provide
closed formulas for $A^{P_n}_{a,b},A^{I_n}_{a,b}$ (with $n\in\Bbb
N$, $a,b\in\Bbb Z$) and $A^{R_p(t)}_{a,b}$ (with $t\in\Bbb N^*$,
$a,b\in\Bbb Z$ and $p\in\Bbb P^1_k$ of degree 1).

\begin{theorem} $A^{P_n}_{a,b}=|Gr_{(a,b)}(P_n)|=\left\{\begin{array}{cc} 0 & \text{for $a<0$ or
$b<0$}\\1 & \text{for $a=b=0$}\\
G^{n+1-a}_{n+1-b}(q)G^{a-b-1}_{a-1}(q)
&\text{otherwise}\end{array}\right.$
\end{theorem}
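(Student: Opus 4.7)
The plan is induction on $n$. The base case $n=0$ is immediate: $P_0 = S_1$ has only submodules $0$ and $S_1$, giving $A^{P_0}_{0,0} = A^{P_0}_{1,0} = 1$ in agreement with the formula (at $(1,0)$, $G^0_1(q)G^0_0(q) = 1$; all other "otherwise" evaluations collapse to $0$).

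For the inductive step $n \geq 1$, I would apply Theorem 3.2 a) to $P_n$, which has no simple summands, so $s = t = 0$ and $M' = P_n$. Using $\neg R^+_1(P_n) = P_{n-1}$, $l = a-b$, and $\underline{dim}\,P_n = (n+1,n)$, the recursion reads
$$A^{P_n}_{a,b} = \sum_{c \in \Bbb Z} q^{c(2b-a+c)}\, G^c_{n+1-2b}(q)\, A^{P_{n-1}}_{b,\,2b-a+c}.$$
The cases $a < 0$ or $b < 0$ are trivially $0$. When $b = 0$, the module $P_{n-1}$ is torsion-free (Lemma 1.1 a) forbids $S_2 = I_0$ from embedding into a preprojective), so $A^{P_{n-1}}_{0,k}$ vanishes for $k \neq 0$; the sole surviving term $c = a$ contributes $G^a_{n+1}(q) = G^{n+1-a}_{n+1}(q)$, matching both the $(0,0)$ clause (at $a=0$) and the "otherwise" clause (at $a \geq 1$, since $G^{a-1}_{a-1}(q) = 1$).

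For the main case $b \geq 1$, the inductive hypothesis supplies the "otherwise" expression at every summand (the $(0,0)$ exception cannot occur because the first coordinate $b \geq 1$; out-of-range terms vanish automatically by Lemma 2.1 a)). After substitution and the change of variable $c = a - b - k$, the sum becomes
$$\sum_k q^{(a-b-k)(b-k)}\, G^{a-b-k}_{n+1-2b}(q)\, G^{n-b}_{n-b+k}(q)\, G^{k-1}_{b-1}(q).$$
The further shift $r = k-1$ identifies this with the left-hand side of the Nanjundiah identity (Proposition 2.2) under the parameter assignment $(m,p,\mu,\nu) = (n+1-a,\,a-b-1,\,n+1-b,\,a-1)$: one verifies $m - \mu + \nu = b-1$, $p + \mu - \nu = n+1-2b$, $m + p = n-b$, and the exponent $(b-1-r)(a-b-1-r)$ matches $(a-b-k)(b-k)$. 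Proposition 2.2 then yields $G^{n+1-a}_{n+1-b}(q)\,G^{a-b-1}_{a-1}(q)$, which is the claimed formula.

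The main obstacle is the variable-matching with Nanjundiah: identifying the substitution $c = a-b-1-r$ that aligns all three Gaussian factors and the exponent simultaneously requires careful bookkeeping. A secondary subtlety is the dichotomy between $b = 0$ (handled by direct inspection) and $b \geq 1$ (handled by Nanjundiah), since the "otherwise" formula misrepresents the singular value $A^{P_{n-1}}_{0,0} = 1$; keeping these strands separate is what makes the inductive passage clean.
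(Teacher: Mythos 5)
Your proof is correct and follows essentially the same route as the paper: induction on $n$, the recursion of Theorem 3.2 a) with $\neg R^+_1(P_n)=P_{n-1}$, and the q-Nanjundiah identity (Proposition 2.2) with exactly the same parameters $(m,p,\mu,\nu)=(n+1-a,\,a-b-1,\,n+1-b,\,a-1)$. The only cosmetic difference is that you settle the $b=0$ case through the recursion (using that no submodule of dimension $(0,k)$, $k>0$, embeds in $P_{n-1}$), whereas the paper counts directly via $A^{P_n}_{a,0}=\sum_{[X]}F^{P_n}_{X\,aS_1}=G^a_{n+1}(q)$; both are valid.
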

\begin{remark} Using the definitions and Lemma 2.1. a) notice that $G^{n+1-a}_{n+1-b}(q)G^{a-b-1}_{a-1}=0$
for $0<a\leq b$, for $a>{n+1}$, for $b>n$, for $a>0$ and $b<0$.
\end{remark}
\begin{proof} Induction on $n$. For $n=0$ we have that $A^{P_0}_{a,b}=1$ when $(a,b)=(1,0)$ or $(a,b)=(0,0)$ and $0$
otherwise so using the previous remark we can see that the formula
is true.

Suppose now $n\geq 1$. Then trivially $A^{P_n}_{a,b}=0$ for $a<0$ or
$b<0$ and $A^{P_n}_{0,0}=1$ so we only need to look at the case
$a,b\geq 0$, $a^2+b^2\neq 0$. Using Theorem 3.2. a) we obtain the
recursion
$$A^{P_n}_{a,b}=\sum_{c\in\Bbb
Z}q^{c(b-l+c)}G^c_{n-2b+1}(q)A^{P_{n-1}}_{a-l,b-l+c},$$ the sum
being finite.

Using Remark 4.2. and the induction hypothesis notice that if $b>0$
(and $a\geq 0$) then
$$A^{P_{n-1}}_{a-l,b-l+c}=A^{P_{n-1}}_{b,2b-a+c}=G^{n-b}_{n-2b+a-c}(q)G^{a-b-c-1}_{b-1}(q)$$
so denoting by $u=a-b-c-1$, using the previous recursion and
Proposition 2.2. with the entries $p=a-b-1$, $m=n+1-a$, $\mu=n+1-b$
and $\nu=a-1$
$$A^{P_n}_{a,b}=\sum_{c\in\Bbb
Z}q^{c(b-l+c)}G^c_{n-2b+1}(q)A^{P_{n-1}}_{a-l,b-l+c}$$$$=\sum_{c\in\Bbb
Z}q^{c(2b-a+c)}G^c_{n-2b+1}(q)G^{n-b}_{n-2b+a-c}(q)G^{a-b-c-1}_{b-1}(q)$$
$$=\sum_{u\in\Bbb
Z}q^{(a-b-u-1)(b-u-1)}G^{a-b-u-1}_{n-2b+1}(q)G^{n-b}_{n-b+1+u}(q)G^{u}_{b-1}(q)=G^{n+1-a}_{n+1-b}(q)G^{a-b-1}_{a-1}(q)$$

If now $b=0$ (and $n+1\geq a>0$) then trivially
$$A^{P_n}_{a,b}=\sum_{\begin{tiny}\begin{array}{c}[X]\end{array}\end{tiny}}F^{P_n}_{X\text{
}{aS_1}}=G^{a}_{n+1}(q)=G^{n+1-a}_{n+1-0}(q)G^{a-0-1}_{a-1}(q).$$

If $b=0$ and $a>n+1$ then trivially $A^{P_n}_{a,b}=0$ (see Remark
4.2.).
\end{proof}
\begin{theorem}$A^{I_n}_{a,b}=|Gr_{(a,b)}(I_n)|=\left\{\begin{array}{cc} 0 & \text{for $a>n$ or
$b>n+1$}\\1 &\text{for
 $a=n$, $b=n+1$}\\
G^{a-b}_{n-b}(q)G^{b}_{a+1}(q) &\text{otherwise}\end{array}\right.$
\end{theorem}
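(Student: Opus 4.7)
The plan is to follow the template of Theorem 4.1 but using Theorem 3.2(b) in place of (a), so that the reflection $R^-_1\neg$ decreases the index: since $\neg R^+_1(I_{n-1})=I_n$ we have $R^-_1\neg(I_n)=I_{n-1}$. I would induct on $n$. The base case $n=0$ is $I_0=S_2$, whose only submodules are $0$ and $S_2$, and the three cases of the formula can be verified directly.

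For the inductive step ($n\ge 1$) the boundary cases are handled at once: $a<0$ or $b<0$ is trivial; $a>n$ or $b>n+1$ forces $A^{I_n}_{a,b}=0$ and the formula agrees via Lemma 2.1(a) or the convention $G^{-l}_a=0$; and $(a,b)=(n,n+1)$ forces the submodule to be all of $I_n$. In the remaining range, Theorem 3.2(b) applied to $M=I_n$ (so $s=t=0$, $M'=I_n$, dimension vector $(n,n+1)$) gives
$$A^{I_n}_{a,b}=\sum_{d\in\mathbb{Z}} q^{d(n-1-2a+b+d)}\,G^d_{2a-n+1}(q)\,A^{I_{n-1}}_{2a-b-d,\,a}.$$

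Now I would split according to whether $a=n$ or $a<n$, paralleling the $b=0$ branch in Theorem 4.1. If $a=n$ (so $b\le n$), then $A^{I_{n-1}}_{2n-b-d,\,n}$ vanishes unless $(2n-b-d,n)=(n-1,n)$, i.e.\ $d=n-b+1$; the corresponding exponent is $0$, and this lone term equals $G^{n-b+1}_{n+1}(q)=G^b_{n+1}(q)$ by Lemma 2.1(a), matching the formula. If $a<n$, then $(2a-b-d,a)$ never hits the special pair $(n-1,n)$, so I can substitute the inductive ``otherwise'' expression $A^{I_{n-1}}_{2a-b-d,\,a}=G^{a-b-d}_{n-1-a}(q)\,G^a_{2a-b-d+1}(q)$ uniformly in $d$: when $(2a-b-d,a)$ lies in the vanishing region of $I_{n-1}$, one of the two Gaussian factors vanishes, either by Lemma 2.1(a) (when $a-b-d>n-1-a\ge 0$) or by the convention $G^{-l}_a=0$ (when $a-b-d<0$). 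The change of variable $e=a-b-d$ then puts the recursion in the form
$$A^{I_n}_{a,b}=\sum_{e\in\mathbb{Z}}q^{(a-b-e)(n-1-a-e)}G^{a-b-e}_{2a-n+1}(q)\,G^e_{n-1-a}(q)\,G^a_{a+1+e}(q),$$
and Proposition 2.2 with $m=b$, $\mu=a+1$, $p=a-b$, $\nu=n-b$ (so that $m-\mu+\nu=n-1-a$, $p+\mu-\nu=2a-n+1$, $m+p=a$) collapses this to $G^b_{a+1}(q)G^{a-b}_{n-b}(q)$, the desired formula. The main obstacle is the bookkeeping in this $a<n$ subcase --- verifying that the ``otherwise'' formula for $I_{n-1}$ is valid uniformly across all $d$ and selecting the correct q-Nanjundiah parameters; once that is in place, the final identification is automatic.
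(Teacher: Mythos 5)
Your proof is correct and takes essentially the same route as the paper's: induction on $n$, the Theorem 3.2(b) recursion for $M=I_n$, uniform substitution of the inductive formula (justified by the vanishing of the Gaussian factors exactly where $A^{I_{n-1}}$ vanishes, as in Remark 4.4), and Proposition 2.2 with the identical entries $p=a-b$, $m=b$, $\mu=a+1$, $\nu=n-b$. The only immaterial deviation is the boundary $a=n$: the paper computes $A^{I_n}_{n,b}=\sum_{[X]}F^{I_n}_{(n+1-b)S_2\,X}=G^{n+1-b}_{n+1}(q)$ directly, whereas you extract the single surviving term $d=n+1-b$ from the recursion; both yield $G^{b}_{n+1}(q)$.
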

\begin{remark} Using the definitions and Lemma 2.1. a) notice that $G^{a-b}_{n-b}(q)G^{b}_{a+1}=0$
for $a<b$, for $a<0$, for $b<0$, for $a>n$ and $b<n+1$.
\end{remark}
\begin{proof} Induction on $n$. For $n=0$ we have that $A^{I_0}_{a,b}=1$ when $(a,b)=(0,1)$ or $(a,b)=(0,0)$ and $0$
otherwise so using the previous remark we can see that the formula
is true.

Suppose now $n\geq 1$. Then trivially $A^{I_n}_{a,b}=0$ for $a>n$ or
$b>n+1$ and $A^{I_n}_{n,n+1}=1$ so we only need to look at the case
$a\leq n$, $b\leq n+1$ (with no simultaneous equality). Using
Theorem 3.2. b) we obtain the recursion
$$A^{I_n}_{a,b}=\sum_{d\in\Bbb
Z}q^{d(n-1-a-l+d)}G^d_{2a-n+1}(q)A^{I_{n-1}}_{a+l-d,b+l},$$ the sum
being finite.

Using Remark 4.4. and the induction hypothesis notice that if $a<
n$(and $b\leq n+1$) then
$$A^{I_{n-1}}_{a+l-d,b+l}=A^{I_{n-1}}_{2a-b-d,a}=G^{a-b-d}_{n-a-1}(q)G^{a}_{2a-b-d+1}(q)$$
so denoting by $u=a-b-d$, using the previous recursion and
Proposition 2.2. with the entries $p=a-b$, $m=b$, $\mu=a+1$ and
$\nu=n-b$
$$A^{I_n}_{a,b}=\sum_{d\in\Bbb
Z}q^{d(n-1-a-l+d)}G^d_{2a-n+1}(q)A^{I_{n-1}}_{a+l-d,b+l}$$$$=\sum_{d\in\Bbb
Z}q^{d(n-1-2a+b+d)}G^d_{2a-n+1}(q)G^{a-b-d}_{n-a-1}(q)G^{a}_{2a-b-d+1}(q)$$$$=\sum_{u\in\Bbb
Z}q^{(a-b-u)(n-1-a-u)}G^{a-b-u}_{2a-n+1}(q)G^{u}_{n-a-1}(q)G^{a}_{a+u+1}(q)=G^{a-b}_{n-b}(q)G^{b}_{a+1}(q).$$

If now $a=n$ (and $0\leq b<n+1$) then trivially
$$A^{I_n}_{a,b}=\sum_{\begin{tiny}\begin{array}{c}[X]\end{array}\end{tiny}}F^{I_n}_{(n+1-b)S_2\text{
}{X}}=G^{n+1-b}_{n+1}(q)=G^{n-b}_{n-b}(q)G^{b}_{n+1}(q).$$

If $a=n$ and $b<0$ then trivially $A^{I_n}_{a,b}=0$ (see Remark
4.4.).
\end{proof}
\begin{lemma}Let $t\in\Bbb N^*$,
$a,b\in\Bbb Z$ and $p\in\Bbb P^1_k$ of degree 1. Then we have

{\rm a)} $A^{R_p(t)}_{a,a}=1$ for $0\leq a\leq t$.

{\rm b)} $A^{R_p(t)}_{a,b}=0$ for $0\leq a<b\leq t$.

{\rm c)} For two points $p,p'\in\Bbb P^1_k$ of the same degree $1$
we have, that $A^{R_p(t)}_{a,b}=A^{R_{p'}(t)}_{a,b}$.
\end{lemma}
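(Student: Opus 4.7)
The plan is to prove (a) and (b) by a direct decomposition argument using Lemma 1.1, and to prove (c) by induction on $a$ via the recursion of Theorem 3.2(a).

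For (a) and (b), I would start with any submodule $Y \hookrightarrow R_p(t)$ and decompose it abstractly as $Y = Y_P \oplus Y_R \oplus Y_I$. Lemma 1.1(a) gives $\Hom(I,R)=0$, so $Y_I = 0$. Lemma 1.1(b) forces every indecomposable regular summand of $Y_R$ to lie in the tube at $p$, and regular uniseriality of $R_p(t)$ then forces $Y_R = R_p(j)$ for some $0 \le j \le t$. Writing $\underline{dim}\, Y_P = (y_1, y_0)$ and using that preprojectives satisfy $y_1 \ge y_0$ with equality iff $Y_P = 0$, we obtain $a - b = y_1 - y_0 \ge 0$. For (b), the hypothesis $a < b$ yields a contradiction, hence $A^{R_p(t)}_{a, b} = 0$. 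For (a), the hypothesis $a = b$ forces $Y_P = 0$ and $Y = R_p(a)$, which is the unique such submodule of $R_p(t)$, giving $A^{R_p(t)}_{a, a} = 1$.

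For (c), I would induct on $a$. The base case $a = 0$ together with the peripheral cases $b \ge a$, $b < 0$, or $a > t$ all collapse via (a), (b), or triviality to values in $\{0, 1\}$ depending only on $a, b, t$, hence independent of $p$. For the inductive step with $1 \le a \le t$ and $0 \le b < a$, I would apply Theorem 3.2(a) to $M = R_p(t)$ (so $s = 0$, $M' = R_p(t)$, with no $S_2$-summand) and use $\neg R^+_1(R_p(t)) = R_{\sigma_1(p)}(t)$ to obtain
$$A^{R_p(t)}_{a, b} = \sum_{c \in \Bbb Z} q^{c(b - l + c)} G^c_{t - 2b}(q)\, A^{R_{\sigma_1(p)}(t)}_{b,\, 2b - a + c}.$$
Each inner term has first index $a' = b < a$. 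Since $\sigma_1$ permutes the degree-1 points of $\Bbb P^1_k$, $\sigma_1(p)$ is again of degree 1, and the induction hypothesis—formulated as invariance across all degree-1 points—gives that the inner values depend only on $b$, $2b - a + c$, $t$, $q$. Hence the entire sum is independent of $p$.

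The main obstacle is the mismatch that the reflection functor sends $p$ to $\sigma_1(p)$ rather than fixing $p$; to close the induction, the hypothesis must be stated as invariance over the whole set of degree-1 points (not as equality with a particular reference point), which is consistent precisely because $\sigma_1$ stabilizes that set.
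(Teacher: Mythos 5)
Your proofs of (a) and (b) are correct and essentially the paper's own argument: the paper likewise rules out preinjective summands of a submodule $Y\hookrightarrow R_p(t)$ via $\Hom(I,R)=0$, pins the regular part down to a single $R_p(t')$ via Lemma 1.1 b) and regular uniseriality, and uses the defect-zero condition (equivalently your count $y_1-y_0=$ number of preprojective summands) to kill $Y_P$ when $a=b$ and to derive the contradiction when $a<b$. Your proof of (c), however, takes a genuinely different route. The paper disposes of (c) in one line: since $F^{R_p(t)}_{XY}\neq 0$ forces every regular component of $X$ and $Y$ to lie in the tube at $p$, the Hall numbers are transported under the symmetry permuting the points of a fixed degree, and the sum defining $A^{R_p(t)}_{a,b}$ is therefore point-independent; this is shorter, needs no recursion, and works verbatim for any degree $d$, but it silently invokes a categorical symmetry (an equivalence permuting same-degree tubes while fixing the rest of the category) that the paper never isolates as a lemma. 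You instead induct on $a$ using Theorem 3.2 a), which is legitimate: that theorem's proof rests only on Proposition 3.1 and Lemmas 1.1--1.5, so there is no circularity with Lemma 4.5, and the inductive step is sound since the inner terms have first index $a-l=b<a$, the sum is finite, and the peripheral cases ($b\geq a$, $b<0$, $b>t$, $a<0$, $a>t$) are point-independent by (a), (b) and triviality. You also correctly identify and fix the one real subtlety, namely that $\neg R^+_1$ carries the tube at $p$ to the tube at $\sigma_1(p)$, so the induction hypothesis must be stated as invariance over the whole set of degree-1 points rather than as equality with one fixed reference point --- this is exactly the issue the paper's proof of Theorem 4.6 sidesteps by citing Lemma 4.5 c). In sum: the paper's argument is shorter and uniform in the degree; yours is more self-contained relative to the tools actually established in the paper, at the modest cost of needing the (a)/(b)-type boundary values to seed the recursion.
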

\begin{proof} a) Suppose $\underline{dim}Y=(a,a)$, $a>0$ (so the defect is
0) and $Y$ embeds into $R_p(t)$. Then using Lemma 1.1. and the
uniseriality of the regulars one can see that $Y$ must be of the
form $R_p(t')$ with $0<t'\leq t$. So it follows that for $0<a\leq
n$, we have $A^{R_p(t)}_{a,a}=F^{R_p(t)}_{R_p(t'')R_p(t')}=1$. The
rest of the statement follows easily.

b) If for $0\leq a<b\leq t$ $A^{R_p(t)}_{a,b}>0$ this would mean
that there is a module $Y$ of dimension $(a,b)$ which embeds into
$R_p(t)$. But $a<b$ means that $Y$ must have a preinjective
component. Using Lemma 1.1. one can notice that we can't embed a
preinjective into a regular module.

c) Using Lemma 1.1. and the uniseriality of regulars, observe that
for $F^{R_p(t)}_{XY}\neq 0$ the modules $X,Y$ can contain at most a
single regular direct component which is of the form $R_p(t')$.
Permuting the points $\{p\in \Bbb P^1_k|d_p=d\}$ the assertion
follows.
\end{proof}
\begin{theorem} Let $t\in\Bbb N^*$,
$a,b\in\Bbb Z$ and $p\in\Bbb P^1_k$ of degree 1. Then we have
$$A^{R_p(t)}_{a,b}=|Gr_{(a,b)}(R_p(t))|=\left\{\begin{array}{cc} 0 &
\text{for $a<0$ or $b<0$}\\G^{t-a}_{t-b}(q)G^{a-b}_{a}(q) &
\text{otherwise}\end{array}\right.$$
\end{theorem}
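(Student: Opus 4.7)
I would mimic the inductive scheme used for $P_n$ (Theorem 4.1) and $I_n$ (Theorem 4.3), doing induction on $a$, using recursion a) of Theorem 3.2 to reduce the first Grassmannian index, and invoking Proposition 2.2 (Nanjundiah) as the closing algebraic identity. The new ingredient compared to the preprojective/preinjective case is that the reflection $\neg R^+_1$ preserves the regular length of $R_p(t)$, only changing the point: $\neg R^+_1(R_p(t))=R_{\sigma_1(p)}(t)$. Lemma 4.6 c) is therefore essential, because it ensures that the Grassmannian cardinalities at the reflected point coincide with those at $p$, so the recursion can be closed in a single regular length.

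\emph{Base cases.} If $a<0$ both sides vanish by convention. If $a=0$, the only submodule with zero first component is $0$, so $A^{R_p(t)}_{0,b}=\delta_{b,0}$, and the formula $G^{t}_{t-b}(q)G^{-b}_{0}(q)$ equals $1$ at $b=0$ and $0$ otherwise (using $G^{-b}_0=0$ for $b>0$ and the zero convention for $b<0$). For $a\geq 1$ with $a\leq b$ I dispose of both ranges directly: if $a<b$ the formula vanishes because $G^{a-b}_a(q)=0$, and $A^{R_p(t)}_{a,b}=0$ by Lemma 4.6 b) when $b\leq t$ and by the dimension obstruction $\dim_k Y_2>\dim_k R_p(t)_2$ when $b>t$; if $a=b$ with $0\leq a\leq t$ both sides equal $1$ by Lemma 4.6 a), while for $a=b>t$ both vanish.

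\emph{Main case $0\leq b<a$.} Applying Theorem 3.2 a) to $M=R_p(t)$ (so $s=0$, the multiplicity of $S_2$ is $0$, $M'=R_p(t)$, $m=n=t$) gives
$$A^{R_p(t)}_{a,b}=\sum_{c\in\Bbb Z} q^{c(2b-a+c)}G^c_{t-2b}(q)\, A^{R_{p'}(t)}_{b,\,2b-a+c},$$
with $p'=\sigma_1(p)$ still a point of degree $1$. By Lemma 4.6 c), $A^{R_{p'}(t)}_{x,y}=A^{R_p(t)}_{x,y}$; since the first index $b$ is strictly less than $a$, the induction hypothesis replaces each summand by $G^{t-b}_{t-2b+a-c}(q)G^{a-b-c}_b(q)$. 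The substitution remains valid at the ``out-of-range'' values of $c$ (with $2b-a+c<0$ or $>b$): in those cases $G^{a-b-c}_b(q)=0$ by Lemma 2.1 a), matching the vanishing of $A^{R_p(t)}_{b,\,2b-a+c}$. The theorem is then equivalent to
$$G^{t-a}_{t-b}(q)G^{a-b}_a(q)=\sum_{c\in\Bbb Z} q^{c(2b-a+c)}G^c_{t-2b}(q)G^{t-b}_{t-2b+a-c}(q)G^{a-b-c}_b(q),$$
which is exactly Proposition 2.2 (Nanjundiah) with the parameters $m=t-a$, $\mu=t-b$, $p=a-b$, $\nu=a$ after the change of summation index $r=a-b-c$; one checks $m-\mu+\nu=b$, $p+\mu-\nu=t-2b$, $m+p=t-b$, $\mu+r=t-b+r$, and the exponents and Gaussians line up term by term.

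\emph{Main obstacle.} The only real subtlety is the index bookkeeping at the Nanjundiah step and the verification that the closed formula continues to give the correct zero value at the degenerate indices appearing under the summation; once these edge cases are confirmed via Lemma 2.1 a) the reduction is completely mechanical and follows the same blueprint as Theorems 4.1 and 4.3.
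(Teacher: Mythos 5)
Your proof is correct and follows essentially the same route as the paper's: the recursion of Theorem 3.2 a) closed up via the point-permutation invariance (Lemma 4.5 c) in the paper's numbering), induction on the first index $a$, and Proposition 2.2 applied with exactly the same parameters $m=t-a$, $\mu=t-b$, $p=a-b$, $\nu=a$ after the substitution $r=a-b-c$. The only difference is bookkeeping: the paper disposes of $b=0$ by the direct count $A^{R_p(t)}_{a,0}=G^a_t(q)$ and verifies the base case $(a,b)=(2,1)$ by hand, whereas you start the induction at $a=0$ and absorb $b=0$ into the inductive step, which works equally well.
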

\begin{remark} Using the definitions and Lemma 2.1. a) notice that $G^{t-a}_{t-b}(q)G^{a-b}_{a}=0$
for $a<b$, for $a>t$, for $b>t$ and $G^{t-a}_{t-b}(q)G^{a-b}_{a}=1$
for $0\leq a=b\leq t$.
\end{remark}
\begin{proof} Using Remark 4.7. observe that the formula is trivially true whenever $a<0$ or $b<0$ or $a>t$ or $b>t$.
Also when $b=0$ and $0\leq a\leq t$ then trivially
$$A^{R_p(t)}_{a,0}=\sum_{\begin{tiny}\begin{array}{c}[X]\end{array}\end{tiny}}F^{R_p(t)}_{X\text{
}{aS_1}}=G^{a}_{t}(q)=G^{t-a}_{t-0}(q)G^{a-0}_{a}(q).$$ Using Lemma
4.5. one can see that the formula is true in the cases $0\leq
a=b\leq t$ and $0\leq a<b\leq t$.

So we only need to consider the case $0<b<a\leq t$. Using Theorem
3.2. a) and Lemma 4.5. c) we obtain the recursion
$$A^{R_p(t)}_{a,b}=\sum_{c\in\Bbb
Z}q^{c(b-l+c)}G^c_{t-2b}(q)A^{R_p(t)}_{a-l,b-l+c},$$ the sum being
finite.

We proceed by induction on $a$. Using the recursion and the
considerations above for $a=2\leq t$ we have
$$A^{R_p(t)}_{2,1}=\sum_{c\in\Bbb
Z}q^{c^2}G^c_{t-2}(q)A^{R_p(t)}_{1,c}=G^0_{t-2}G^1_t+qG^1_{t-2}=G^{t-2}_{t-1}G^1_2$$

Let now $3\leq a\leq t$ and $0<b<a$. Using Remark 4.7. and the
induction hypothesis notice that
$$A^{R_p(t)}_{a-l,b-l+c}=A^{R_p(t)}_{b,2b-a+c}=G^{t-b}_{t-2b+a-c}(q)G^{a-b-c}_{b}(q)$$
so denoting by $u=a-b-c$, using the previous recursion and
Proposition 2.2. with the entries $p=a-b$, $m=t-a$, $\mu=t-b$ and
$\nu=a$
$$A^{R_p(t)}_{a,b}=\sum_{c\in\Bbb
Z}q^{c(b-l+c)}G^c_{t-2b}(q)A^{R_p(t)}_{a-l,b-l+c}$$$$=\sum_{c\in\Bbb
Z}q^{c(2b-a+c)}G^c_{t-2b}(q)G^{t-b}_{t-2b+a-c}(q)G^{a-b-c}_{b}(q)$$$$=\sum_{u\in\Bbb
Z}q^{(a-b-u)(b-u)}G^{a-b-u}_{t-2b}(q)G^{t-b}_{t-b+u}(q)G^{u}_{b}(q)=G^{t-a}_{t-b}(q)G^{a-b}_{a}$$
\end{proof}

We can see that in the cases above $|Gr_{\underline{e}}(M)_{\Bbb
F_q}|$ is an integer polynomial $p_{\underline{e},M}(q)$. Using that
$\chi(Gr_{\underline{e}}(M)_{\Bbb C})=p_{\underline{e},M}(1)$ and
$G^n_a(1)=\left(\begin{array}{c} a\\n\end{array}\right)$ we obtain
\begin{corollary}\cite{CalZel}

{\rm a)} $\chi(Gr_{(a,b)}(P_n)_{\Bbb C})=\left\{\begin{array}{cc} 0
& \text{for $a<0$ or
$b<0$}\\1 & \text{for $a=b=0$}\\
\left(\begin{array}{c}
n+1-b\\n+1-a\end{array}\right)\left(\begin{array}{c}
a-1\\a-b-1\end{array}\right) &\text{otherwise}\end{array}\right.$

{\rm b)} $\chi(Gr_{(a,b)}(I_n)_{\Bbb C})=\left\{\begin{array}{cc} 0
& \text{for $a>n$ or $b>n+1$}\\1 &\text{for
 $a=n$, $b=n+1$}\\\left(\begin{array}{c}
n-b\\a-b\end{array}\right)\left(\begin{array}{c}
a+1\\b\end{array}\right) &\text{otherwise}\end{array}\right.$

{\rm c)} $\chi(Gr_{(a,b)}(R_p(t))_{\Bbb C})=\left\{\begin{array}{cc}
0 & \text{for $a<0$ or $b<0$}\\\left(\begin{array}{c}
t-b\\t-a\end{array}\right)\left(\begin{array}{c}
a\\a-b\end{array}\right) & \text{otherwise}\end{array}\right.$
\end{corollary}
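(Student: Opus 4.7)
The plan is to read the corollary simply as the specialization $q=1$ of Theorems 4.1, 4.3 and 4.6. By Caldero-Reineke \cite{CalRei}, for each affine quiver the Grassmannian cardinality $|Gr_{\underline{e}}(M)_{\Bbb F_q}|$ is given by an integer polynomial $p_{\underline{e},M}(q)$, and the Euler characteristic of the complex Grassmannian equals $p_{\underline{e},M}(1)$. The three theorems in Section 4 provide explicit expressions for these polynomials in the indecomposable cases $M=P_n$, $M=I_n$ and $M=R_p(t)$ (with $d_p=1$) as products of two Gaussian coefficients.

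The next step is to evaluate these Gaussian coefficients at $q=1$. From the definition $G^l_a(q)=\prod_{i=0}^{l-1}\frac{q^{a-i}-1}{q^{l-i}-1}$, an application of L'Hospital's rule (or a direct limit computation factor by factor) gives $G^l_a(1)=\binom{a}{l}$ for $l\geq 0$, and $G^l_a(1)=0$ for $l<0$, matching the usual binomial convention. Plugging this into the formulas of Theorems 4.1, 4.3 and 4.6 yields exactly the three binomial product expressions stated in parts (a), (b) and (c) of the corollary, together with the same boundary cases ($a<0$ or $b<0$; $a=b=0$; $a=n$, $b=n+1$; etc.).

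For the regular case one should make a brief comment: the corollary is stated over $\Bbb C$, so $p$ ranges over $\Bbb P^1_{\Bbb C}$, but every point of $\Bbb P^1_{\Bbb C}$ has degree $1$. Thus the hypothesis $d_p=1$ under which Theorem 4.6 was proved is automatically satisfied at $q=1$, and no extension of the argument is needed. There is essentially no obstacle here; the only thing to be careful about is matching the boundary case conventions (ranges of $a,b$ and the isolated values where the formula gives $1$) between the $q$-formula and its specialization, which is immediate from Remarks 4.2, 4.4 and 4.7 read with $\binom{a}{n}$ in place of $G^n_a(q)$.
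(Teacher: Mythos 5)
Your proposal is correct and is essentially identical to the paper's own (very short) proof: the corollary is obtained by specializing Theorems 4.1, 4.3 and 4.6 at $q=1$, using the Caldero--Reineke fact that $\chi(Gr_{\underline{e}}(M)_{\Bbb C})=p_{\underline{e},M}(1)$ together with $G^l_a(1)=\binom{a}{l}$. Your extra remark that every point of $\Bbb P^1_{\Bbb C}$ has degree $1$ (so the hypothesis $d_p=1$ of Theorem 4.6 is automatic over $\Bbb C$) is exactly the observation the paper records in Remark 4.9.
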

\begin{remark} Notice that there is no closed formula for $A^{R_p(t)}_{a,b}$ with $t\in\Bbb N^*$,
$a,b\in\Bbb Z$ and $p\in\Bbb P^1_k$ of degree $d_p>1$. This because
$A^{R_p(t)}_{a,a}=1$ only for $0\leq a\leq d_pt$ with $d_p|a$.
However this case will not appear over $\Bbb C$.
\end{remark}

\section{\bf A recursive algorithm for the cardinalities $A^{M}_{a,b}=|Gr_{(a,b)}(M)|$ with $M$
arbitrary}
Let $M\in\Mod kK$ arbitrary and suppose $\underline{dim}M=(m,n)$. We
know that up to isomorphism $M=P\oplus R\oplus I$ where $P$
(respectively $I$,$R$) is a module with all its indecomposable
components preprojective (respectively preinjective, regular). We
also know that $A^M_{a,b}=0$ for $a<0$ or $b<0$ or $a>m$ or $b>n$.

Applying the recursion from Theorem 3.2. a) after a finite number of
steps $A^{P\oplus R\oplus I}_{a,b}$ is reduced to knowing some
numbers of the form $A^{R'\oplus I'}_{a',b'}$. Applying the
recursion from Theorem 3.2. b) after a finite number of steps
$A^{R'\oplus I'}_{a',b'}$ is reduced to knowing some numbers of the
form $A^{R''}_{a'',b''}$. Using the arguments from the proof of
Lemma 4.5. b) we can see that $A^{R''}_{a'',b''}=0$ for $a''<b''$ so
applying the recursion from Theorem 3.2. a) after a finite number of
steps $A^{R''}_{a'',b''}$ with $a''\geq b''\geq 0$ is reduced to
knowing some numbers of the form $A^{R'''}_{a''',a'''}$. (Here
"some" means of course "a finite number").

Suppose $R'''=\oplus_{i=1}^mR_{p_i}(\lambda^i)$, where $\lambda^i$
are partitions, $p_i\in \Bbb P^1_k$ different points with degree
$d_{p_i}$ and $\sum_{i=1}^md_{p_i}|\lambda^i|=n$ so
$\underline{dim}R'''=(n,n)$. Denote $a'''$ simply by $a$ and suppose
$0\leq a\leq n$. For partitions $\lambda,\mu,\nu$ we will denote by
$g^{\lambda}_{\nu\mu}(q^{d_p})=F^{R_p(\lambda)}_{R_p(\nu)R_p(\mu)}$
the classical Hall polynomial (see \cite{Macd} for details). We know
that $g^{\lambda}_{\nu\mu}=g^{\lambda}_{\mu\nu}$ and
$g^{\lambda}_{\nu\mu}=0$ unless $|\lambda|=|\mu|+|\nu|$ and
$\mu,\nu\subseteq\lambda$.

Using Lemma 1.1. b) and Lemma 1.4. we have that
$$[\oplus_{i=1}^mR_{p_i}(\nu^i)][\oplus_{i=1}^mR_{p_i}(\mu^i)]=\prod_{i=1}^m[R_{p_i}(\nu^i)][R_{p_i}(\mu^i)]$$
so
$$F^{\oplus_{i=1}^mR_{p_i}(\lambda^i)}_{\oplus_{i=1}^mR_{p_i}(\nu^i)\ \oplus_{i=1}^mR_{p_i}(\mu^i)}=\prod_{i=1}^mF^{R_{p_i}(\lambda^i)}_
{R_{p_i}(\nu^i)R_{p_i}(\mu^i)}$$

Using the considerations above and the arguments from the proof of
Lemma 4.5. a) we will have
$$A^{R'''}_{a,a}=\sum_{\begin{tiny}
\begin{array}{c}\nu^i,\mu^i\subseteq\lambda^i\\\sum_{i=1}^md_{p_i}|\mu^i|=a\\\sum_{i=1}^md_{p_i}|\nu^i|=n-a\end{array}\end{tiny}}
F^{\oplus_{i=1}^mR_{p_i}(\lambda^i)}_{\oplus_{i=1}^mR_{p_i}(\nu^i)\
\oplus_{i=1}^mR_{p_i}(\mu^i)}=\sum_{\begin{tiny}
\begin{array}{c}\nu^i,\mu^i\subseteq\lambda^i\\\sum_{i=1}^md_{p_i}|\mu^i|=a\\\sum_{i=1}^md_{p_i}|\nu^i|=n-a\end{array}\end{tiny}}
\prod_{i=1}^mg^{\lambda^i}_ {\nu^i\mu^i}(q^{d_{p_i}})$$


\begin{thebibliography}{99}
\bibitem{assem} {\it I. Assem, D. Simson, A. Skowronski}, Elements of
Representation Theory of Associative Algebras, Volume 1: Techniques
of Representation Theory. LMS Student Texts (No. 65) (Cambridge
Univ. Press 2006).
\bibitem{aus} {\it M. Auslander, I. Reiten, S. Smalo},
Representation Theory of Artin Algebras, Cambridge Stud. in Adv.
Math. 36 (Cambridge Univ. Press 1995).
\bibitem{CalRei} {\it P. Caldero, M. Reineke}, On the quiver Grassmannians in the
acyclic case. Preprint arxiv math.RT/0611074
\bibitem{CalZel} {\it P. Caldero, A. Zelevinsky}, Laurent expansions in cluster algebras via quiver
representations. Moscow Mathematical Journal, Vol. 6, special issue
in honor of Alexander Alexandrovich Kirillov on the occasion of his
seventieth birthday, (2006), 411-429.
\bibitem{dlabrin} {\it V. Dlab, C. M. Ringel},
Indecomposable representations of graphs and algebras. AMS Memoirs
173 (1976).
\bibitem{Macd} {\it I. G. Macdonald},
Symmetric Functions and Hall Polynomials. Clarendon Press Oxford
1995.
\bibitem{Nan} {\it T. S. Nanjundiah}, Remark on a note of P.
Tur\'an. Am. Monthly 65(1958), 354.
\bibitem{rin1} {\it C. M. Ringel},
Tame algebras and Integral Quadratic Forms. Lect. Notes Math. 1099
(Springer 1984).
\bibitem{Szanto}{\it Cs. Sz\'ant\'o}, Hall numbers and the
composition algebra of the Kronecker algebra. Algebras and
Representation Theory 9,(2006), 465-495.

\end{thebibliography}
\end{document}